\documentclass{amsart}
\usepackage{amssymb}
\usepackage{amsmath}
\usepackage{textcomp}
\hoffset-.4in%

\setlength{\textheight}{21cm}
\setlength{\textwidth}{5.6in}
\makeindex

\newcommand{\ba}{\begin{array}}
\newcommand{\eea}{\end{eqnarray}}
\newcommand{\ea}{\end{array}}

\newtheorem{definition}{Definition}[section]
\newtheorem{theorem}[definition]{Theorem}
\newtheorem{lemma}[definition]{Lemma}
\newtheorem{proposition}[definition]{Proposition}

\newtheorem{remark}[definition]{Remark}

\usepackage[matrix,arrow,curve]{xy}
\usepackage[utf8x]{inputenc}
\usepackage{tikz}

\begin{document}
\title[Characterizing Regular Lagrangians by Lefschetz Fibration]{Characterizing Regular Lagrangians by Lefschetz Fibration}
\author[S. Mukherjee]{Sauvik Mukherjee}
\address{mukherjeesauvik@gmail.com\\}
\keywords{Regular Lagrangians, Weinstein cobordism, Lefschetz Fibration}

\begin{abstract} 
In \cite{Eliashberg} Eliashberg, Ganatra and Lazarev have introduced the notion of \textit{Regular Lagrangians} in \textit{Weinstein cobordisms} and also predicted that \textit{Regular Lagrangians} can be characterized by the existence of \textit{Weinstein Lefschetz Fibrations}. In this paper we present a proof of this fact with an hypothesis.
 \end{abstract}
\maketitle

\section{introduction} In \cite{Eliashberg} Eliashberg, Ganatra and Lazarev have introduced the notion of \textit{Regular Lagrangians} in \textit{Weinstein cobordisms} and also predicted that \textit{Regular Lagrangians} can be characterized by the existence of \textit{Weinstein Lefschetz Fibrations}. In this paper we present a proof of this fact with an hypothesis.\\

Let us first define the notion of \textit{Regular Lagrangians}. But in order to do so we need to understand the notion of \textit{Liouville cobordisms}.\\

A \textit{Liouville cobordism} between contact manifolds $(\partial_{\pm}W,\xi_{\pm})$ is an even dimensional cobordism (say of dimension $2n$) $(W,\partial_{-}W,\partial_{+}W)$, where $W$ is equipped with a symplectic form $\omega$ and an expanding Liouville vector field $X$ which is outward pointing along $\partial_{+}W$ and inward pointing along $\partial_{-}W$, moreover the contact structure induced by the Liouville form $\lambda=\iota(X)\omega$ on $\partial_{\pm}W$ coincides with the given ones.\\

A \textit{Liouville cobordism} $(W,\partial_{-}W,\partial_{+}W)$ is called a \textit{Weinstein cobordism} if there exists a defining Morse function $\phi:W\to \mathbb{R}$ for which $X$ is gradient-like and $(\omega,X,\phi)$ is called a \textit{Weinstein cobordism} structure on $W$. Equivalently if the Liouville form $\lambda$ is given then $\omega$ and $X$ can be recovered, so we may say $(\lambda, \phi)$ to be a \textit{Weinstein structure}. A \textit{Weinstein cobordism} $(W,\partial_{-}W,\partial_{+}W)$ with $\partial_{-}W=\Phi(empty)$ is called a \textit{Weinstein domain}.\\

In what follows we shall consider \textit{exact Lagrangian subcobordism} with Legendrian boundary $(L,\partial_{-}L,\partial_{+}L)\subset (W,\partial_{-}W,\partial_{+}W)$ in \textit{Weinstein cobordism} with contact boundary.\\

Now consider $(L,\partial_{-}L,\partial_{+}L)\subset (W,\partial_{-}W,\partial_{+}W)$ an \textit{exact Lagrangian cobordism} in a \textit{Liouville cobordism} and let $N(L)$ be a small tubular neighborhood of $L\subset W$. Define $\partial^{int}N(L)=\partial N(L)\cap Int(W)$, $\partial^{ext}_{\pm}N(L)=\partial N(L)\cap \partial_{\pm}W$ and $W_{L}=\overline{W-N(L)}$. As $L$ is exact we can make $X\pitchfork \partial N(L)$ and hence $(W_{L},\omega_{\mid W_{L}})$ has a natural \textit{Liouville cobordism} structure with sutured boundary $\partial_{+}W_{L}\cup \partial_{-}W_{L}$, where \[\partial_{+}W_{L}=\partial W-\partial^{ext}N(L),\ \partial_{-}W_{L}=\partial^{int}N(L)\] 

\begin{definition}(\cite{Eliashberg})
\label{Regular Lagrangian}
Let $L\subset (W,\omega',X',\phi')$ be an exact Lagrangian cobordism in a Weinstein cobordism. $L$ is called regular if the Weinstein structure is homotopic to a Weinstein cobordism structure $(W,\omega,X,\phi)$ through a homotopy of Weinstein structures for which $L$ remains Lagrangian and $X$ is tangent to $L$. This is equivalent to the condition that the Liouville form $\lambda:=\iota(X)\omega$ has the property that $\lambda_{\mid L}=0$.
\end{definition}

Such a Weinstein structure is called \textit{tangent Weinstein structure} to the regular Lagrangian $L$. It follows that the critical points of $\phi_{\mid L}$ are global critical points for $W$.\\

It turns out that for a regular Lagrangian $L$ and any Weinstein structure tangent to $L$ can further be adjusted so that the following happens.\\

The Weinstein cobordism structure $(W,\omega,X,\phi)$ tangent to $L$ is called adjusted to $L$ if there exists a regular value $c\in \mathbb{R}$ of $\phi$ such that 

\begin{enumerate}
\item all critical points of $\phi$ in the sublevel set $\{\phi\leq c\}$ lies on $L$ and the indeces of these critical points for $\phi$ and $\phi_{\mid L}$ coincides;\\
\item there are no critical points of $\phi$ on $L\cap \{\phi\geq c\}$.
\end{enumerate}

So the handlebody presentation has the following description. First the handles corresponding to the critical points of $\phi_{\mid L}$ are attached and then the remaining ones are attached so that the attaching spheres of these remaining handle attachments do not intersect $\partial_{+}L$.\\

It turns out that if $\partial_{-}L$ is empty then the Weinstein structure on $W(L)=\{\phi\leq c\}$ is equivalent(radially) to the disjoint union of the trivial cobordism over $\partial_{-}W$, i.e,  $\partial_{-}W\times [0,1]$ and the canonical Weinstein structure on $T^*L$.\\

If $\partial_{-}L$ is non-empty then the handlebody presentation of $L$ given by $\phi_{\mid L}$ and the gradient-like vector field $X_{\mid L}\in TL$ defines a weinstein cobordism structure on the cotangent disk bundle $N$ of $L$ with $\partial_{-}N$ equal to a tubular neighborhood of $\partial_{-}L$ in $J^1(\partial_{-}L)$ and $\partial_{+}N=\partial N- Int\partial_{-}N$. Then $W(L)$ is obtained by attaching a generalized handle, i.e, by attaching the Weinstein cobordism $N$ to the trivial cobordism $\partial_{-}W\times [0,1]$ over $\partial_{-}W$ by identifying $\partial_{-}L$ with a neighborhood of a Legendrian sphere in $\partial_{-}W\times \{1\}$.\\

So in any case the underlying symplectic structure on $W(L)$ near $L$ is given by the symplectic structure on $T^*L$ but the Weinstein structure differs depending on whether $\partial_{-}L$ is empty or not.\\

 Next we need to define the notion of \textit{Weinstein Lefschetz fibration}.

\begin{definition}(\cite{Pardon})
\label{Weinstein Lefschetz fibration}
A Weinstein Lefschetz fibration is a tuple \[W^{2n}=((W_0,\lambda_0,\phi_0);L_1,...,L_m)\] where $W_0^{2n-2}$ is a Weinstein domain and $L_j$'s are exact parametrized Lagrangian spheres in $W_0^{2n-2}$, where parametrized means diffeomorphisms $S^{n-1}\to L$ upto precomposition with elements of $O(n)$. Its total space $|W|$ is given as follows. Consider the Weinstein manifold \[(W_0\times \mathbb{C},\lambda_0-J^*d(\frac{1}{2}|z|^2),\phi_0+|z|^2)\] The corresponding Liouville vector field is given by $X_{\lambda_0}+\frac{1}{2}(x\partial_x+y\partial_y)$. Take Legendrian lifts $\Lambda_j\subset (W_0\times S^1,\lambda_0+Nd\theta)$ of $L_j\subset W_0$ such that $\Lambda_j$ projects to small interval around $\frac{2\pi j}{m}\in S^1$. We take $N$ so large that these intervals remains disjoint. The embedding $S^1\to \mathbb{C}$ as circle of radius $N^{1/2}$ pulls back the Liouville form $-J^*d(\frac{1}{2}|z|^2)$ to the contact form $Nd\theta$. So $\Lambda_j$ could be thought as a Legendrian at the level set $\{|z|=N^{1/2}\}\subset W_0\times \mathbb{C}$. Then the restriction of downward Liouville flow defines a map $\Lambda_j\times \mathbb{R}_{\geq 0}\to W_0\times \mathbb{C}$ and which intersects the set $\{\phi_0+|z|^2=0\}$ along a Legendrian $\Lambda'_j$. Again $N$ is taken so large that the projection of $\{\phi_0+|z|^2\leq 0\}$ to $\mathbb{C}$ is contained inside the disc of radius $N^{1/2}$. Now the total space $|W|$ is defined as the result of attaching Weinstein handles (\cite{Weinstein}) to the Weinstein domain $\{\phi_0+|z|^2=0\}$ along the Legendrians $\Lambda'_j$.
\end{definition} 

Now we end this section with the main theorem of this paper.

\begin{theorem}
\label{Main}
Let $L\subset W^{2n}$ be a regular Lagrangian, then there exists a Weinstein Lefschetz fibration over $\mathbb{C}$ which projects $L$ to a ray in $\mathbb{R}\subset \mathbb{C}$ if the first cohomology of $W$ is trivial.
\end{theorem}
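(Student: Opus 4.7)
The plan is to construct the required Weinstein Lefschetz fibration $\pi\colon W\to \C$ in two stages: first build it on a neighborhood of $L$ using the tangent Weinstein structure, and then extend across the remaining handles by incorporating them as additional vanishing cycles. By regularity, one may replace the given Weinstein structure by one adjusted to $L$, so there exists a regular value $c\in \R$ for which all critical points of $\phi$ below $c$ lie on $L$ (with matching indices of $\phi_{\mid L}$), while the attaching Legendrian spheres of the remaining handles sit in $\partial_+ W(L)$ away from $\partial_+ L$. As recalled in the excerpt, $W(L)=\{\phi\le c\}$ is Weinstein-equivalent to the disjoint union of the trivial cobordism $\partial_-W\times[0,1]$ and a (generalized) cotangent disk bundle $N$ of $L$. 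The task therefore reduces to constructing a Weinstein Lefschetz fibration on $N$ whose restriction projects $L$ onto a ray $\R_{\ge 0}$, and then enlarging it.

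For the first stage, I use the handle decomposition of $L$ coming from $\phi_{\mid L}$. The fiber $W_0$ of $\pi_0\colon N\to \C$ will be a Weinstein thickening of the sub-handlebody of $L$ formed by handles of index strictly less than $n$, sitting over a base point on the ray. Each top-index handle of $N$ is realized as a Lefschetz thimble: a Lagrangian disk projecting from a singular value on $\R_{>0}$ outward along $\R$. With singular values placed on the positive real axis in the order dictated by the critical values of $\phi_{\mid L}$, the zero section $L\subset N$ is swept out by these thimbles together with its lower-dimensional skeleton inside $W_0$, and therefore projects into $\R_{\ge 0}$. When $\partial_-L\ne\emptyset$ one uses the generalized cotangent handle description: $N$ is attached to $\partial_-W\times[0,1]$ via identifying a neighborhood of $\partial_-L$ with a neighborhood of a Legendrian sphere in $\partial_-W\times\{1\}$, and this identification must be arranged compatibly with the open book cut out by the restriction of $\pi_0$.

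For the second stage, the restriction of $\pi_0$ to $\partial_+W(L)$ induces an open book decomposition of this contact boundary in the sense of Giroux--Pardon. Each remaining handle is attached along a Legendrian sphere lying in $\partial_+W(L)\setminus \partial_+L$, and by the relative Giroux theorem one can Legendrian-isotope each attaching sphere into a page. Once there, the sphere becomes a Lagrangian vanishing cycle in a copy of $W_0$, and the corresponding Weinstein handle extends the Lefschetz structure. This yields the desired fibration $\pi\colon W\to \C$, still projecting $L$ to the ray.

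The main obstacle I anticipate is the coherent coordination of the two stages, especially when $\partial_-L\ne\emptyset$. In that case the generalized cotangent handle identification, the open book on $\partial_-W$ induced by $\pi_0$, and the Legendrian isotopies used in the second stage to push the remaining attaching spheres into pages must all be arranged simultaneously, without disturbing the established ray projection of $L$ and without creating spurious intersections with $\partial_+L$. Tracking the Weinstein homotopies through these modifications, and verifying that they preserve both regularity of $L$ and the thimble picture over the positive real axis, is where the most delicate technical work will lie; the first-stage construction on $T^\ast L$ itself is closer to standard technology, whereas the matching of open-book pages with the adjusted handle data is the genuinely new ingredient.
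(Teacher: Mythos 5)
Your proposal takes a genuinely different route from the paper. The paper does not use handle decompositions or open books at all: it runs a Donaldson-type construction of approximately holomorphic sections $s_k$ of line bundles $E^{\otimes k}\to W$ (Section 4), arranged so that exactly one nondegenerate critical point $p_1$ of the resulting Lefschetz-type map lies on $L$ and so that the coefficients $w_i$ have prescribed arguments. It then identifies the stable manifold $L'$ of the Hamiltonian vector field $X_{\operatorname{Im}\log s}$, argues that $L'$ is $C^0$-close and diffeomorphic to $L$, and uses the Weinstein neighborhood theorem together with the $h$-principle for isosymplectic immersions (Theorem \ref{isosymp}) to build a symplectomorphism $\Xi$ carrying $L$ to $L'$ and supported near $L$; pulling back by $\Xi$ makes $L$ itself a set on which $\operatorname{Arg}(s)$ is constant, hence projecting to a ray. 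Your approach — adjusted Weinstein structure, thimbles for the top handles of $L$, and absorption of the remaining handles via the Giroux--Pardon open-book machinery — is the more "expected" strategy in this area, and if completed it would likely give a cleaner and more checkable argument than the paper's estimates-heavy construction. But as written it is a plan, not a proof, and the two places you wave at as routine are exactly where the content lies.

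Concretely: (1) Your first stage asserts that the cotangent disk bundle $N$ of $L$, with the Weinstein structure induced by $\phi_{\mid L}$, carries a Weinstein Lefschetz fibration whose central fiber $W_0$ contains the sub-handlebody of $L$ of index $<n$ and whose thimbles realize the index-$n$ handles, so that $L$ projects to $\mathbb{R}_{\ge0}$. This is not standard technology for a general $L$: it is essentially the problem of constructing a Lefschetz fibration on $D^*L$ adapted to a Morse function on $L$ (Johns' problem), which is known by explicit construction only in restricted cases. Moreover it silently requires the index-$<n$ skeleton of $L$ to sit as a regular (Lefschetz-compatible) Lagrangian-with-boundary inside the page $W_0$ — a statement of the same type as the theorem itself, one dimension down — so at minimum you need to set up an induction and prove its base and step. (2) Your second stage invokes a "relative Giroux theorem" to Legendrian-isotope the remaining attaching spheres into pages of the open book on $\partial_+W(L)$. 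The Giroux--Pardon results you would need here involve positive stabilizations of the open book, which modify $W_0$ and add vanishing cycles; you must show these stabilizations, and the isotopies of the attaching spheres, can be performed away from $\partial_+L$ and without changing the thimble description of $L$, and you must also track the cyclic ordering of the new vanishing cycles so that the resulting total space is Weinstein-homotopic to $W$. Neither step is false, but neither is supplied, so the proposal currently has genuine gaps precisely at the points you defer.
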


\begin{remark}
Observe that we have only stated the 'only if' part, the 'if' part is obvious.
\end{remark}

The method of the proof is the following. In section 4 we have constructed a sequence of complex line bundles $E^k$ over $W^{2n}$ along with sections $s_k$ such that $s_k$ has a non-degenerate singularity on $L\subset W$ and the zero locus of $s_k$ for large $k$ is Symplectic. The idea of the construction is similar to \cite{Donaldson} but it is different as we needed some improvisations.\\

Then we showed that the stable manifold, denoted $L'$, of the Hamiltonian vector field $X_{Im\log s}$ is $C^0$-close to $L$ and is diffeomorphic to $L$ (see \ref{Diff-C^0} below), where $s=s_k$ for large $k$. Then using results from section 3 and Weinstein neighborhood theorem we construct an iso-symplectic immersion which takes $L$ to $L'$ (see \ref{iso-sym pull} below). Then we pull back the Weinstein structure by this iso-symplectic immersion with some adjustment near $\partial W$. This is possible by \ref{iso-sym pull} below. Obviously under this composition $L$ is mapped to a ray in $\mathbb{C}$ because the argument is constant along $L'$. Moreover the iso-symplectic immersion is homotopic through iso-symplectic immersions to the identity and hence the pulled back Weinstein structure is homotopic through Weinstein structures to the given Weinstein structure.  \\

The above idea has been used already in \cite{Pardon}. Here we have followed it.\\

We must take $c$ so small that this tubular neighborhood of $L$ is first considered and then the rest of $W$, where $c\in \mathbb{R}$ is the regular value as in the definition of Regular Lagrangian.\\

\section{Preliminaries}First we need some notations. Let $z=(z_1,...,z_n)\in \mathbb{C}^n$ be a point in $\mathbb{C}^n$. Define 
\begin{enumerate}
\item $\frac{\partial}{\partial z_j}\equiv \frac{1}{2}(\frac{\partial}{\partial x_j}-i\frac{\partial}{\partial y_j})$, $j=1,...,n.$\\
\item $\frac{\partial}{\partial \bar{z}_j}\equiv \frac{1}{2}(\frac{\partial}{\partial x_j}+i\frac{\partial}{\partial y_j})$, $j=1,...,n.$\\
\item $dz_j\equiv dx_j+idy_j,\ j=1,...,n$\\
\item $d\bar{z}_j\equiv dx_j-idy_j,\ j=1,...,n$\\
\end{enumerate}

For $0\leq p,q\leq n$, the differential form \[\omega=\Sigma_{|\alpha|=p,|\beta|=q}\omega_{\alpha \beta}dz^{\alpha}\wedge d\bar{z}^{\beta}\] is said to be of \textit{type or bidegree} $(p,q)$, where $\alpha,\beta$ are multi-indices. Any differential form is the sum of the terms of the form $\omega_{\alpha \beta}dz^{\alpha}\wedge d\bar{z}^{\beta}$. For $\omega=\Sigma_{|\alpha|=p,|\beta|=q}\omega_{\alpha \beta}dz^{\alpha}\wedge d\bar{z}^{\beta}$ define 

\begin{enumerate}
\item $\partial \omega=\Sigma_{j=1}^n\Sigma_{\alpha,\beta} \frac{\partial\omega_{\alpha,\beta}}{\partial z_j}dz_j\wedge dz^{\alpha}\wedge d\bar{z}^{\beta}$\\
\item $\bar{\partial} \omega=\Sigma_{j=1}^n\Sigma_{\alpha,\beta} \frac{\partial\omega_{\alpha,\beta}}{\partial \bar{z}_j}d\bar{z}_j\wedge dz^{\alpha}\wedge d\bar{z}^{\beta}$\\
\end{enumerate}

Now we shall state a theorem from \cite{Mishachev} which we need for the proof of \ref{Main}. \\

\begin{theorem}(Weinstein Neighborhood Theorem, \cite{Mishachev})
\label{WNT}
Any isotropic and, in particular, Lagrangian immersion $L\to W$ extends to an isosymplectic immersion $U\to W$, where $U$ is a tubular neighborhood of the zero-section in the cotangent bundle $T^*L$ endowed with the canonical symplectic structure.
\end{theorem}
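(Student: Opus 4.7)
The plan is to construct an initial smooth extension $\phi_0 : U \to W$ whose derivative along the zero section intertwines the canonical and ambient symplectic forms, and then apply Moser's trick to correct $\phi_0$ into a genuine isosymplectic immersion. The entire argument is local around $L$, so everything happens in a small tubular neighborhood; since we allow $L \to W$ to be an immersion rather than an embedding, $U$ will be a neighborhood of the zero section in $T^*L$ and need not embed in $W$.

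For the initial extension, the symplectic form $\omega$ supplies a canonical identification of $T^*L$ with a subbundle of $TW|_L$. In the Lagrangian case, $(T_xL)^\omega = T_xL$, so the map $v \mapsto \omega_x(v,\cdot)|_{T_xL}$ identifies the normal bundle $T_xW/T_xL$ with $T^*_xL$. In the general isotropic case, choose an $\omega$-compatible almost complex structure $J$; then $F := J(TL) \subset TW|_L$ has rank $\dim L$, is transverse to $TL^\omega$, and pairs non-degenerately with $TL$ under $\omega$, giving the identification $F \cong T^*L$ via $f \mapsto \omega(f,\cdot)|_{TL}$. Fix an auxiliary Riemannian metric on $W$ and use the normal exponential map restricted to $F$ to produce a smooth extension $\phi_0 : U \to W$ of the given immersion. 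By construction, along the zero section $d\phi_0$ sends $T(T^*L) = TL \oplus T^*L$ onto $TL \oplus F$ in a way that identifies the canonical form $\omega_0 = d\lambda_{can}$ with $\omega$ as a bilinear form on tangent spaces at $L$.

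We now have two closed 2-forms on $U$, namely $\omega_0$ and $\omega_1 := \phi_0^*\omega$, whose restrictions to each $T_x(T^*L)$ for $x \in L$ coincide. The relative Poincar\'e lemma, applied via the fiberwise retraction $r_s(x, \xi) = (x, s\xi)$ of $T^*L$ onto its zero section, produces a primitive $\omega_1 - \omega_0 = d\alpha$ with $\alpha$ vanishing identically along $L$; the explicit formula is $\alpha = \int_0^1 r_s^*\bigl(\iota_Y(\omega_1 - \omega_0)\bigr)\, ds/s$, where $Y = \xi\,\partial_\xi$ is the radial Euler field on fibers, and the apparent singularity at $s = 0$ is cancelled because $\omega_1 - \omega_0$ vanishes on the zero section. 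Set $\omega_t = \omega_0 + t(\omega_1 - \omega_0)$; after shrinking $U$, these forms are closed and non-degenerate for all $t \in [0,1]$ since $\omega_t = \omega_0$ along $L$. Solve $\iota_{X_t}\omega_t = -\alpha$ for a time-dependent vector field $X_t$, which vanishes on $L$ because $\alpha$ does. Its flow $\psi_t$ fixes $L$ pointwise, exists on a smaller neighborhood of the zero section for all $t \in [0,1]$, and satisfies $\psi_1^*\omega_1 = \omega_0$ by the standard Moser calculation. The composition $\phi_0 \circ \psi_1$ is then the desired isosymplectic immersion.

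The main technical subtlety is producing a primitive $\alpha$ with first-order vanishing along $L$ and verifying that the Moser flow exists uniformly on a common neighborhood of the zero section; the former is handled by the retraction formula above, while the latter follows from $X_t|_L = 0$ together with standard ODE estimates on compact subsets of $L$ (or a local-to-global patching argument when $L$ is non-compact). Once these two ingredients are in place the construction is entirely formal, and the content of the theorem reduces to the linear-algebraic choice of a $T^*L$-subbundle $F \subset TW|_L$ on which $\omega$ realises the canonical pairing between $TL$ and $T^*L$.
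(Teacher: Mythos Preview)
The paper does not actually prove this theorem; it is stated in the preliminaries section and attributed to \cite{Mishachev} without argument, so there is no ``paper's own proof'' to compare against. That said, your proposal is correct and is precisely the standard Moser-method proof one finds in the cited reference and elsewhere: identify a rank-$\dim L$ subbundle $F\subset TW|_L$ on which $\omega$ realises the canonical pairing with $TL$ (via a compatible $J$ in the isotropic case), exponentiate to get an initial extension matching the symplectic forms along $L$, then run the relative Poincar\'e lemma and Moser flow to upgrade this to a genuine isosymplectic immersion. Your handling of the isotropic case is clean---the observation that $J(TL)$ is isotropic and pairs nondegenerately with $TL$ is exactly what makes $TL\oplus J(TL)$ a symplectic subbundle modelled on $T(T^*L)$. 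The only cosmetic point is that the homotopy-formula primitive is more commonly written as $\alpha=\int_0^1 r_s^*\bigl(\iota_{V_s}(\omega_1-\omega_0)\bigr)\,ds$ with $V_s=\partial_s r_s$ rather than in terms of the Euler field with a $ds/s$; the two are equivalent, but the former avoids having to explain away the apparent singularity.
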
 

\begin{theorem}(\cite{Datta})
\label{datta}
Let $(V,\omega_V)$ be an open symplectic manifold and $A\subset V$ be a polyhedron of positive codimension (a core), then there exists a homotopy of iso-symplectic immersions $g_t:V\to V$ such that $g_0=id$ and $g_1(V)\subset Op(A)$.
\end{theorem}

\section{$h$-Principle} This section does not have any new result, we just recall some facts from the theory of $h$-principle which we shall need in our proof.\\

 Let $X\to M$ be any fiber bundle and let $X^{(r)}$ be the space of $r$-jets of jerms of sections of $X\to M$ and $j^rf:M\to X^{(r)}$ be the $r$-jet extension map of the section $f:M\to X$. A section $F:M\to X^{(r)}$ is called holonomic if there exists a section $f:M\to X$ such that $F=j^rf$. In the following we use the notation $Op(A)$ to denote a small open neighborhood of $A\subset M$ which is unspecified. \\
 
 Let $\mathcal{R}$ be a subset of $X^{(r)}$. Then $\mathcal{R}$ is called a differential relation of order $r$. $\mathcal{R}$ is said to satisfy $h$-principle if any section $F:M\to \mathcal{R}\subset X^{(r)}$ can be homotopped to a holonomic section $\tilde{F}:M\to \mathcal{R}\subset X^{(r)}$ through sections whose images are contained in $\mathcal{R}$. Put differently,  if the space of sections of $X^{(r)}$ landing into $\mathcal{R}$ is denoted by $Sec \mathcal{R}$ and the space of holonomic sections of $X^{(r)}$ landing into $\mathcal{R}$ is denoted by $Hol \mathcal{R}$ then $\mathcal{R}$ satisfies $h$-principle if the inclusion map $Hol \mathcal{R}\hookrightarrow Sec \mathcal{R}$ induces a epimorphism at $0$-th homotopy group $\pi_0$. $\mathcal{R}$ satisfies parametric $h$-principle if $\pi_k(Sec \mathcal{R}, Hol \mathcal{R})=0$ for all $k\geq 0$.\\ 
  
Let $(V,d\alpha_V)$ and $(W,d\alpha_W)$ be two exact symplectic manifolds with $dim(V)\leq dim(W)$. $\mathcal{R}_{isosymp}$ be the relation of isosymplectic immersions from $V$ to $W$. Let $A\subset V$ be a polyhedron of positive codimension.

\begin{theorem}(\cite{Mishachev})
\label{isosymp}
All forms of local $h$-principle holds for the inclusion \[Hol_{OpA}\mathcal{R}_{isosymp}\to Sec_{OpA}\mathcal{R}_{isosymp}\] on $OpA$.
\end{theorem}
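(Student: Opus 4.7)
The plan is to decouple the differential-topological content (producing an immersion that approximates the formal data) from the symplectic content (matching the closed two-form) by combining the Holonomic Approximation Theorem of Eliashberg--Mishachev for the open relation of immersions with a Moser-type stability argument. Fix a formal isosymplectic immersion $F \colon Op\,A \to \cR_{isosymp} \subset X^{(1)}$; the goal is to deform $F$ through formal sections landing in $\cR_{isosymp}$ to a holonomic one on $Op\,A$, after a $C^0$-small diffeotopy of this neighborhood.

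\emph{Step 1 (holonomic approximation for the underlying immersion).} Forgetting the closed-form condition, $F$ becomes a formal section of the open relation $\cR_{imm}$ of immersions $V\to W$. Since $A$ has positive codimension, the Holonomic Approximation Theorem produces, after a $C^0$-small diffeotopy $h$ of a smaller neighborhood $Op'A$, a genuine immersion $\tilde f \colon Op'A \to W$ whose $1$-jet $C^0$-approximates $F\circ h$ as closely as I wish.

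\emph{Step 2 (Moser correction).} The error two-form $\eta := \tilde f^* d\alpha_W - d\alpha_V$ is closed and $C^0$-small; since $\tilde f^* d\alpha_W = d(\tilde f^*\alpha_W)$ and $d\alpha_V$ are both exact, $\eta$ is exact, and after shrinking $Op'A$ so that it deformation retracts onto $A$ one can apply the Poincar\'e homotopy operator to obtain a $C^0$-small primitive $\beta$. The standard Moser trick --- solving $\iota_{X_t}(d\alpha_V + t\eta) = -\beta$ for $t\in [0,1]$ (the form $d\alpha_V + t\eta$ is symplectic by smallness of $\eta$) and integrating --- yields a diffeomorphism $\phi_1$ of $Op'A$ with $(\tilde f\circ \phi_1)^* d\alpha_W = d\alpha_V$. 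Hence $\tilde f\circ \phi_1$ is an honest isosymplectic immersion $C^0$-close to $F$. The required path in $Sec_{OpA}\cR_{isosymp}$ from $F$ to $j^1(\tilde f\circ \phi_1)$ is obtained by concatenating the holonomic approximation homotopy with $t\mapsto j^1(\tilde f\circ \phi_t)$, after a fibrewise adjustment in $X^{(1)}\to X$ that rescales the intermediate form $\phi_t^*(d\alpha_V+\eta)$ back to $d\alpha_V$.

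\emph{Parametric/relative version and main obstacle.} For families of sections parametrized by a disc $D^k$, and for the relative variants with prescribed behavior on $\partial D^k$ or on a sub-polyhedron of $A$, the same two-step procedure applies once one has the parametric Holonomic Approximation Theorem and a parametric Poincar\'e lemma on a tubular neighborhood of $A$. The main obstacle is Step 2 in this parametric-relative setting: the primitive $\beta_\lambda$ and the Moser flow $\phi_t^\lambda$ must depend continuously on $\lambda\in D^k$ and must vanish identically wherever the input is already holonomic and isosymplectic. The standard resolution uses cutoff functions respecting the simplicial structure of $A$ together with a parametric homotopy operator on a tubular neighborhood of $A$; these are the same devices that drive the proof of the Weinstein neighborhood theorem (\ref{WNT}) and, assembled together, yield all forms of the local $h$-principle for the inclusion $Hol_{OpA}\cR_{isosymp}\hookrightarrow Sec_{OpA}\cR_{isosymp}$.
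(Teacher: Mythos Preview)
The paper's own proof is a single sentence: it cites 16.4.2 of Eliashberg--Mishachev and stops. Your proposal, by contrast, sketches the actual content behind that citation, and the route you outline---holonomic approximation for the open immersion relation followed by a Moser-type correction of the pulled-back form---is precisely the mechanism used in that reference. So in substance your approach is correct and matches the cited source; you have simply unpacked what the paper leaves as a black box.

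One point deserves a little more care. In Step~1 you pass to $\cR_{imm}$ and run holonomic approximation there; the resulting homotopy of formal sections lives in $\cR_{imm}$, not automatically in $\cR_{isosymp}$. To obtain a homotopy inside $\cR_{isosymp}$ you must correct each intermediate monomorphism $F_t$ fibrewise so that $F_t^*\,d\alpha_W=d\alpha_V$ exactly (e.g.\ by a symplectic Gram--Schmidt/polar-decomposition argument, which is available because $F_t^*\,d\alpha_W$ is $C^0$-close to $d\alpha_V$ and hence nondegenerate). You allude to a ``fibrewise adjustment'' only for the second half of the concatenated path; it is needed for the first half as well. With that adjustment made uniformly in parameters, the parametric and relative versions go through as you indicate.
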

 
 \begin{proof}
 Direct consequence of 16.4.2 of \cite{Mishachev}.
 \end{proof}

\section{Donaldson's Construction with a Preassigned Singularity}
\label{DC}
In this section we follow the construction in \cite{Donaldson} with some changes in order to construct a sequence of sections $s_k:W^{2n}\to E^k$ which will have a non-degenerate singularity on $L\subset W$, where $L,W$ are as in \ref{Main} and $E$ is a hermitian line bundle on $W$ and $E^k=E^{\otimes k}$.\\

Now we shall recall some parts of the local theory as in \cite{Donaldson}.\\

A complex structure on a real vector space $V$ is a decomposition of $V^*\otimes \mathbb{C}$ as a complex conjugate subspaces, the complex linear and anti-linear functionals. If a reference complex structure is fixed then $V$ becomes $\mathbb{C}^n$ and in this case the conjugate subspaces turns out to be just one forms of type $(1,0)$ and $(0,1)$, i.e, \[Hom_{\mathbb{R}}(\mathbb{C}^n,\mathbb{C})=\Lambda^{1,0}\oplus \Lambda^{0,1}\]  If $J$ is another complex structure then $\Lambda^{1,0}_J$ is the graph of a complex linear map \[\mu:\Lambda^{1,0}\to \Lambda^{0,1}\] and in this case $\Lambda^{0,1}_J$ is the graph of $\bar{\mu}:\Lambda^{0,1}\to \Lambda^{1,0}$. The condition that $\Lambda^{1,0}_J \pitchfork \Lambda^{0,1}_J$ is that $(1-\mu\bar{\mu})$ is invertible. For an almost-complex structure $J$ the corresponding $\mu$ will be a bundle map i.e, it depends on the base point smoothly and we shall denote it by $\mu_z$ where $z$ is the base point. In this case \[\bar{\partial}_J(f)=\bar{\partial}f-\mu (\partial f),\] where $\partial,\ \bar{\partial}$ are the ordinary operators defined by the standard complex structure on $\mathbb{C}^n$. \\

For $\rho <1$ define $\delta_{\rho}:\mathbb{C}^n\to \mathbb{C}^n$ by $\delta_{\rho}(z)=\rho z$. Consider the almost-complex structure $\tilde{J}=\delta_{\rho}^*J$ defined on $\rho^{-1}\Omega$ where $\Omega$ is the open neighborhood of zero in $\mathbb{C}^n$ where $J$ was defined. The $\tilde{\mu}$ corresponding to $\tilde{J}$ is given by \[\tilde{\mu}_z=\mu_{\rho z}\] In this case we get the following estimate \[|\tilde{\mu}_z|\leq C\rho|z|,\ |\triangledown \tilde{\mu}|\leq C\rho,\] where $C$ is a constant related to the Nijenhius tensor and its derivative. Throughout the paper we use the convention that $C$ represents a positive constant which changes from line to line.\\

Now consider the K\"{a}hler form \[\omega_0=\frac{i}{2}\Sigma_{j=1}^{n}dz_{j}d\bar{z}_{j}\] on $\mathbb{C}^n$. Set $A=\frac{1}{4}(\Sigma_{j=1}^{n}z_{j}d\bar{z}_{j}-\bar{z}_{j}dz_{j})$ then $\omega_0=idA$. \\

If $\xi \to \mathbb{C}^n$ be a line bundle with connection having curvature $\omega_0$ there is a trivialization of $\xi$ with respect to which the connection matrix turns out to be $A$. $A$ defines a coupled $\bar{\partial}$-operator $\bar{\partial}_A$, \[\bar{\partial}_A(f)=\bar{\partial}f+A^{0,1}f,\] where $A^{0,1}$ is the $(0,1)$ component of $A$. Similarly we define $\partial_A$. Now observe that 
\begin{enumerate}
\item $\bar{\partial}_A(e^{-(Re\Sigma z_j^2)/4}+i)=\frac{e^{-(Re\Sigma z_j^2)/4}}{4}\Sigma_j (z_j-\bar{z}_j)d\bar{z}_j+\frac{i}{4}\Sigma_jz_jd\bar{z}_j$\\
\item $\partial_A(e^{-(Re\Sigma z_j^2)/4}+i)=-\frac{e^{-(Re\Sigma z_j^2)/4}}{4}\Sigma_j (z_j+\bar{z}_j)dz_j-\frac{i}{4}\Sigma_j\bar{z}_jdz_j$\\
\end{enumerate}

We replace $\omega_0$ by $k\omega_0$, for a positive integer $k$. The replacement of $\xi$ by $\xi^k$ is same as the dilation with scalar factor $k^{-1/2}$ on $\mathbb{C}^n$.\\

Now let $(W,\omega)$ be a \textit{Liouville} manifold with a compatible complex structure $J$ and a complex line bundle $E\to W$ with a $U(1)$ connection having curvature $-i\omega$. Let $g$ be the Riemannian metric determined by $J$ and $\omega$. Then $g_k=kg$ is the one determined by $J$ and $k\omega$.\\

We consider the cubical coordinate chart which is longer in the $y$-coordinate directions, i.e, for $R>>r$ set \[I^{2n}_{r,R}=\{(x,y)\in \mathbb{R}^n\times \mathbb{R}^n:|x_j|<r,|y_j|<R\}\] In fact we take $R=k^{-1/2}$ and $r=k^{-1}$. Consider the Darboux chart $\chi_{p,k}:I^{2n}_{k^{-1}+\epsilon_k,k^{-1/2}+\epsilon_k}\to W$ around $p\in W$, where $\epsilon'_k=\epsilon k^{-1}\ and\ \epsilon_k=\epsilon k^{-1/2}$. We may further assume that all derivatives of $\chi_p$ are bounded and derivative of $\chi_p$ at zero is complex linear with respect to $J$ on $T_pW$. Therefore $\chi^*J$ represents a structure determined by a bundle map $\mu$ over $I^{2n}_{k^{-1}+\epsilon'_k,k^{-1/2}+\epsilon'_k}$ as explained above. The derivatives of $\mu$ satisfies bounds independent of $p\in W$. So now given $k$ we consider the new chart\[\tilde{\chi}_{p,k}=\chi_{p,k}\circ \delta_{k^{-1/2}}:k^{1/2}I^{2n}_{k^{-1}+\epsilon'_k,k^{-1/2}+\epsilon'_k}=I^{2n}_{k^{-1/2}+\epsilon_k,1+\epsilon_k}\to W\] Let $\tilde{\mu}$ be the bundle map representing the almost-complex structure on this new chart. So $\tilde{\mu}$ satisfies the following bounds
\[
\begin{array}{lcl}
|\tilde{\mu}_z|&\leq & Ck^{-1/2}|z|,\\
|\triangledown \tilde{\mu}_z|&\leq & Ck^{-1/2}.\\
\end{array} 
\]
$C$ does not depend on $p$. Observe that $\tilde{\chi}^*(-ik\omega)=-i\omega_0$ where $-ik\omega$ is the curvature on $E^k$. So $\tilde{\chi}$ admits a connection preserving bundle map which we shall also denote by $\tilde{\chi}$. So $\tilde{\chi}:\xi\to E^k$ is the bundle map. So one can think of $\sigma=e^{-(Re\Sigma z_j^2)/4}+i$ as local sections of $E^k$ around $p\in W$.\\

Let $\bar{\partial}_{A,\tilde{J}}$ denote the $\bar{\partial}$-operator defined by $\tilde{J}$ on $k^{1/2}I^{2n}_{k^{-1/2}+\epsilon_k,1+\epsilon_k}$ with connection matrix $A$. Then \[\bar{\partial}_{A,\tilde{J}}(f)=(\bar{\partial}f+A^{0,1}f)+\tilde{\mu}(\partial f+A^{1,0}f).\] If we replace $f$ by $e^{-(Re\Sigma z_j^2)/4}+i$ we get the following estimate \[|\bar{\partial}_{A,\tilde{J}}(e^{-(Re\Sigma z_j^2)/4}+i)|\leq \frac{|z|}{4}(2e^{-(Re\Sigma z_j^2)/4}+1)+Ck^{-1/2}|z|^2(2e^{-(Re\Sigma z_j^2)/4}+1)\] Furthermore we want the estimate on the derivative. So observe \[\triangledown=\bar{\partial}_{A,\tilde{J}}+\partial_{A,\tilde{J}}=\bar{\partial}_A+\partial_A+\tilde{\mu}\partial_A+\bar{\tilde{\mu}}\bar{\partial}_A\] The only term in $\bar{\partial}_{A,\tilde{J}}$ for which $\tilde{\mu}$ is not a factor is \[\bar{\partial}_A(e^{-(Re\Sigma z^2_j)/4}+i)=\frac{e^{-(Re\Sigma z^2_j)/4}}{4}\Sigma_j(z_j-\bar{z}_j)d\bar{z}_j+\frac{i}{4}\Sigma_jz_jd\bar{z}_j\] So let us compute $(\bar{\partial}_A+\partial_A)(\bar{\partial}_A(e^{-(Re\Sigma z^2_j)/4}+i))$. So \\
\[
\begin{array}{rcl}
(\bar{\partial}_A+\partial_A)(\bar{\partial}_Af) &=& (\bar{\partial}_A+\partial_A)(\bar{\partial}f+A^{0,1}f)\\
 &=& \bar{\partial}_A(\partial f+A^{0,1}f)+\partial_A(\partial f+A^{0,1}f)\\
  &=& (\bar{\partial}^2f+\bar{\partial}A^{0,1}f+A^{0,1}\bar{\partial}f+A^{0,1}A^{0,1}f)\\
  & & +(\partial\bar{\partial}f+\partial A^{0,1}f+A^{1,0}\bar{\partial}f+A^{1,0}A^{0,1}f)
\end{array}
\]
The first term \\
\[(1)\ \bar{\partial}^2f=0\]
The second term \\
\[
\begin{array}{rcl}
(2)\ \bar{\partial}A^{0,1}f &=& \bar{\partial}(\frac{1}{4}\Sigma_j(e^{-(Re\Sigma z^2_j)/4}+i)z_jd\bar{z}_j)\\
\Rightarrow (1/2)(\frac{\partial}{\partial_{x_l}}+i\frac{\partial}{\partial_{y_l}})A^{0,1}f &=& \frac{1}{8}[e^{-(Re\Sigma z^2_j)/4}+\Sigma_jz_j\frac{e^{-(Re\Sigma z^2_j)/4}}{4}(-2x_l)\\
 & & +\frac{i}{4}+i[ie^{-(Re\Sigma z^2_j)/4}+\Sigma_jz_j\frac{e^{-(Re\Sigma z^2_j)/4}}{4}(2y_l)+(i^2/4)]]d\bar{z}_l\wedge d\bar{z}_j\\
 \Rightarrow \bar{\partial}A^{0,1}f &=& - \frac{e^{-(Re\Sigma z^2_j)/4}}{4^2}\Sigma_{j,l}\bar{z}_l z_j d\bar{z}_l \wedge d\bar{z}_j
\end{array}
\]

The third term 

\[
\begin{array}{rcl}
(3)A^{0,1}\bar{\partial}f &=& A^{0,1}[-\Sigma_j z_j \frac{e^{-(Re\Sigma z^2_j)/4}}{4}d\bar{z}_j]\\
 &=& -\frac{e^{-(Re\Sigma z^2_j)/4}}{4^2}\Sigma_{j,l} z_l\bar{z}_j d\bar{z}_l\wedge d\bar{z}_j
\end{array}
\]

The forth term

\[
\begin{array}{rcl}
(4)A^{0,1}A^{0,1}f &=& \frac{e^{-(Re\Sigma z^2_j)/4}}{4^2}\Sigma_{j,l}z_lz_jd\bar{z}_l\wedge d\bar{z}_j\\
 & & +\frac{i}{4^2}\Sigma_{j,l}z_l z_j d\bar{z}_l\wedge d\bar{z}_j
\end{array}
\]

The fifth term

\[
\begin{array}{rcl}
(5)\partial \bar{\partial}f &=& \partial [-\Sigma_jz_j\frac{e^{-(Re\Sigma z^2_j)/4}}{4}d\bar{z}_j]\\
\frac{1}{2}(\frac{\partial}{\partial_{x_l}}-i\frac{\partial}{\partial_{y_l}})(\bar{\partial}f) &=& (1/2)[-\frac{e^{-(Re\Sigma z^2_j)/4}}{4}-\Sigma_jz_j\frac{e^{-(Re\Sigma z^2_j)/4}}{4^2}(-2x_l)\\
 & & +i[i\frac{e^{-(Re\Sigma z^2_j)/4}}{4}+\Sigma_jz_j \frac{e^{-(Re\Sigma z^2_j)/4}}{4^2}(2y_l)]]dz_l\wedge d\bar{z}_j\\
 \partial \bar{\partial}f &=& [-\frac{e^{-(Re\Sigma z^2_j)/4}}{4}+\frac{e^{-(Re\Sigma z^2_j)/4}}{4^2}\Sigma_{j,l}z_lz_j]dz_l\wedge d\bar{z}_j 
\end{array}
\]

The Sixth term

\[
\begin{array}{rcl}
(6)\partial A^{0,1}f &=& \partial[(1/4)\Sigma_jz_j(e^{-(Re\Sigma z^2_j)/4}+i)d\bar{z}_j]\\
(1/2)(\frac{\partial}{\partial_{x_l}}-i\frac{\partial}{\partial_{y_l}})A^{0,1}f &=& (1/8) [(e^{-(Re\Sigma z^2_j)/4}+i)+\Sigma_jz_j\frac{e^{-(Re\Sigma z^2_j)/4}}{4}(-2x_l)\\
 & & -i[i(e^{-(Re\Sigma z^2_j)/4}+i)+\Sigma_jz_j\frac{e^{-(Re\Sigma z^2_j)/4}}{4}(2y_l)]]dz_l\wedge d\bar{z}_j\\
 \partial A^{0,1}f &=& [(1/4)(e^{-(Re\Sigma z^2_j)/4}+i)-\frac{e^{-(Re\Sigma z^2_j)/4}}{4^2}\Sigma_{j,l}z_jz_l]dz_l\wedge d\bar{z}_j
\end{array}
\]

The seventh term

\[
\begin{array}{rcl}
(7)A^{1,0}\bar{\partial}f &=& A^{1,0}[-\Sigma_jz_j\frac{e^{-(Re\Sigma z^2_j)/4}}{4}d\bar{z}_j]\\
 &=& \Sigma_{j,l}\bar{z}_lz_j \frac{e^{-(Re\Sigma z^2_j)/4}}{4^2}dz_l\wedge d\bar{z}_j 
\end{array}
\]

The eighth term 

\[
\begin{array}{rcl}
(8)A^{1,0}A^{0,1}f &=& -(1/4^2)\Sigma_{j,l}\bar{z}_l z_j f dz_l\wedge d\bar{z}_j\\
 &=& -\frac{e^{-(Re\Sigma z^2_j)/4}}{4^2}\Sigma_{j,l} \bar{z}_lz_j dz_l\wedge d\bar{z}_j-(i/4^2)\Sigma_{j,l} \bar{z}_lz_jdz_l\wedge d\bar{z}_j
\end{array}
\]

So we get 

\[
\begin{array}{rcl}
(\partial_A+\bar{\partial}_A)(\bar{\partial}_Af) &=&  - \frac{e^{-(Re\Sigma z^2_j)/4}}{4^2}\Sigma_{j,l}\bar{z}_l z_j d\bar{z}_l \wedge d\bar{z}_j\\
 & &  -\frac{e^{-(Re\Sigma z^2_j)/4}}{4^2}\Sigma_{j,l} z_l\bar{z}_j d\bar{z}_l\wedge d\bar{z}_j\\
 & & +\frac{e^{-(Re\Sigma z^2_j)/4}}{4^2}\Sigma_{j,l}z_lz_jd\bar{z}_l\wedge d\bar{z}_j\\
 & & +\frac{i}{4^2}\Sigma_{j,l}z_l z_j d\bar{z}_l\wedge d\bar{z}_j\\
 & & +[-\frac{e^{-(Re\Sigma z^2_j)/4}}{4}+\frac{e^{-(Re\Sigma z^2_j)/4}}{4^2}\Sigma_{j,l}z_lz_j]dz_l\wedge d\bar{z}_j \\
 & & +[(1/4)(e^{-(Re\Sigma z^2_j)/4}+i)-\frac{e^{-(Re\Sigma z^2_j)/4}}{4^2}\Sigma_{j,l}z_jz_l]dz_l\wedge d\bar{z}_j\\
 & & +\Sigma_{j,l}\bar{z}_lz_j \frac{e^{-(Re\Sigma z^2_j)/4}}{4^2}dz_l\wedge d\bar{z}_j \\
 & & -\frac{e^{-(Re\Sigma z^2_j)/4}}{4^2}\Sigma_{j,l} \bar{z}_lz_j dz_l\wedge d\bar{z}_j-(i/4^2)\Sigma_{j,l} \bar{z}_lz_jdz_l\wedge d\bar{z}_j\\
 &=& \frac{e^{-(Re\Sigma z^2_j)/4}}{4^2}[\Sigma_{j,l}(z_lz_j-z_l\bar{z}_j-\bar{z}_lz_j)]d\bar{z}_l\wedge d\bar{z}_j\\
 & & +\frac{i}{4^2}[\Sigma_{j,l}z_lz_jd\bar{z}_l\wedge d\bar{z}_j-\Sigma_{j,l}\bar{z}_lz_j dz_l\wedge d\bar{z}_j]\\
 & & +\frac{i}{4} dz_l\wedge d\bar{z}_j
\end{array}
\]

Using $|z_j|\leq |z|$ we observe 

\[\frac{e^{-(Re\Sigma z^2_j)/4}}{4^2}[\Sigma_{j,l}(z_lz_j-z_l\bar{z}_j-\bar{z}_lz_j)]d\bar{z}_l\wedge d\bar{z}_j\leq \frac{e^{-(Re\Sigma z^2_j)/4}}{4} \frac{3}{4}{n\choose 2}|z|^2\]

\[\frac{i}{4^2}[\Sigma_{j,l}z_lz_jd\bar{z}_l\wedge d\bar{z}_j-\Sigma_{j,l}\bar{z}_lz_j dz_l\wedge d\bar{z}_j]\leq \frac{1}{8}{n\choose 2}|z|^2\]

\[\frac{i}{4} dz_l\wedge d\bar{z}_j\leq \frac{2}{8}{n\choose 2}\]

So the estimate we need is (using $|\tilde{\mu}_z|\leq Ck^{-1/2}|z|$)
\[
\begin{array}{rcl}
|\triangledown\bar{\partial}_{A,\tilde{J}}(e^{-(Re\Sigma z_j^2)/4}+i)| &\leq& \frac{e^{-(Re\Sigma z^2_j)/4}}{4} \frac{3}{4}{n\choose 2}|z|^2+\frac{1}{8}[{n\choose 2}|z|^2+2{n\choose 2}]\\
 & & +Ck^{-1/2}[e^{-(Re\Sigma z^2_j)/4}P^3(|z|)+Q^3(|z|)]\\
\end{array}
\]
where $P^3,Q^3$ are cubic polynomial with constant term zero.\\

We now define the cut-off function $\beta_k:\mathbb{C}^n\to \mathbb{R}$ as a $C^{\infty}$-function with $\beta_k=s_k$ on $I^{2n}_{k^{-1/2}-\epsilon_k,1-\epsilon_k}$ and $\beta_k=0$ outside $I^{2n}_{k^{-1/2}+\epsilon_k,1+\epsilon_k}$, where $s_k$ is a positive real sequence so small such that derivative $\triangledown \beta_k$ satisfies $|\triangledown \beta_k|\leq Ck^{-1}|z|^2$ and hence $|\triangledown \triangledown \beta_k|\leq Ck^{-1}|z|$. So we have \[|\bar{\partial}_{A,\tilde{J}}(\beta_k f)|\leq |\beta_k| |\bar{\partial}_{A,\tilde{J}}f|+|\triangledown\beta_k||f|\] Similarly \[|\triangledown(\bar{\partial}_{A,\tilde{J}}\beta_k f)|\leq |\triangledown \triangledown \beta_k||f|+|\triangledown \beta_k||\triangledown_Af|+|\beta_k||\triangledown(\bar{\partial}_{A,\tilde{J}}f)|+ |\triangledown \beta_k||\bar{\partial}_{A,\tilde{J}}f|\] So the cut-off function improves the estimates as follows.

\[
\begin{array}{rcl}
|\bar{\partial}_{A,\tilde{J}}\beta_k(e^{-(Re\Sigma z_j^2)/4}+i)|&\leq& |\beta_k|[\frac{|z|}{4}(2e^{-(Re\Sigma z_j^2)/4}+1)+Ck^{-1/2}|z|^2(2e^{-(Re\Sigma z_j^2)/4}+1)]\\
 & & + |\triangledown \beta_k|(e^{-(Re\Sigma z_j^2)/2}+1)^{1/2}\\
  &\leq & \frac{1}{2} s_k|z|e^{-(Re\Sigma z_j^2)/4}+|\beta_k|\frac{|z|}{4}+Ck^{-1/2}s_k|z|^2 e^{-(Re\Sigma z_j^2)/4}\\
 & &+Ck^{-1/2}|\beta_k||z|^2 + Ck^{-1}|z|^2e^{-(Re\Sigma z_j^2)/2}+|\triangledown \beta_k|
\end{array}
\]

\[
\begin{array}{rcl}
|\triangledown \bar{\partial}_{A,\tilde{J}}\beta_k(e^{-(Re\Sigma z_j^2)/4}+i)|&\leq& |\triangledown \triangledown \beta_k| (e^{-(Re\Sigma z_j^2)/2}+1)^{1/2}\\
 & &+|\triangledown \beta_k|\frac{|z|}{4}(2e^{-(Re\Sigma z_j^2)/4}+1)\\
 & &+|\beta_k| [\frac{e^{-(Re\Sigma z^2_j)/4}}{4} \frac{3}{4}{n\choose 2}|z|^2+\frac{1}{8}[{n\choose 2}|z|^2+2{n\choose 2}]]\\
  & &+|\beta_k|[Ck^{-1/2}[e^{-(Re\Sigma z^2_j)/4}P^3(|z|)+Q^3(|z|)]]\\
 & &+ Ck^{-1}|z|^2|\bar{\partial}_{A,\tilde{J}}f|\\
 &\leq & |\triangledown \triangledown \beta_k| e^{-(Re\Sigma z_j^2)/2}+|\triangledown \triangledown \beta_k|\\
 & &+|\triangledown \beta_k|\frac{|z|}{4}e^{-(Re\Sigma z_j^2)/4}+|\triangledown \beta_k|\frac{|z|}{4}\\
 & &+s_k[\frac{e^{-(Re\Sigma z^2_j)/4}}{4} \frac{3}{4}{n\choose 2}|z|^2+\frac{1}{8}[{n\choose 2}|z|^2+2{n\choose 2}]]\\
  & &+s_kCk^{-1/2}e^{-(Re\Sigma z^2_j)/4}P^3(|z|)+Ck^{-1/2}|\beta_k|Q^3(|z|)\\
 & &+ Ck^{-1}|z|^2|\bar{\partial}_{A,\tilde{J}}f|
\end{array}
\]

We define the smooth section $\sigma_p$ of the line bundle $E^k$ over $W$ by \[\sigma_p=(\beta_k\sigma)\circ \tilde{\chi}^{-1}\] around $p\in \tilde{\chi}(I^{2n}_{k^{-1/2}+\epsilon_k,1+\epsilon_k})$ which we extend by $0$ on all of $W$.\\

Now fix a finite cover, independent of $k$, of $W$ by charts $\phi_s:O_s\to W$, $s=1,...,S$, where $O_s\subset \mathbb{C}^n$ is bounded with \[\frac{1}{2}|x-y|\leq d(\phi(x),\phi(y))\leq 2|x-y|\] We may choose nested sets $O''_s\subset O'_s\subset O_s$, so that $W$ is covered by $\phi_s(O''_s)$. Let $\Lambda$ be the lattice $\alpha_k\mathbb{Z}^n+i\mathbb{Z}^n$ in $\mathbb{C}^n$, where $\alpha_k$ is chosen in such a way that the translations of the center of $I^{2n}_{k^{-1/2}+\epsilon_k,1+\epsilon_k}$ to the lattice points of $\Lambda$ cover $\mathbb{C}^n$. Let $\Lambda_s$ be $\phi_s(\Lambda\cap O'_s)$. Define the set $\cup_1^M\{p_i\}=\cup_1^S\Lambda_s$. So the images under $\phi_s$ of the translations of $I^{2n}_{k^{-1/2}+\epsilon_k,1+\epsilon_k}$ to the lattice points of $\Lambda_s$ gives neighborhoods of $p_i$'s which cover $W$. Moreover we arrange that if a neighborhood of $p_i$ as mentioned above intersects the regular Lagrangian $L\subset W$ then $p_i\in L$. Later we shall translate the origins of these coordinate charts except one (the first one) such that there is only one singularity on $L$.\\

Now consider the terms of the right hand sides of the estimates for \[|\bar{\partial}_{A,\tilde{J}}\beta_k(e^{-(Re\Sigma z_j^2)/4}+i)|\ and\ |\triangledown \bar{\partial}_{A,\tilde{J}}\beta_k(e^{-(Re\Sigma z_j^2)/4}+i)|\] with the understanding that outside of the neighborhoods of $p_i$'s they vanish as $\beta_k$ vanishes outside these neighborhoods. We also note that we can replace $s_k$ by $k^{-1/2}s_k$ for large $k$.\\

 Our final section $s$ will be \[s=s_{w}=\Sigma_1^Mw_i\sigma_i,\ \sigma_i=\sigma_{p_i},\ |w_i|\leq 1,\ w=(w_1,...,w_M)\] Obviously it depends on $k$. Later we shall make some specific choice for $w_i$'s. So we have the following

\begin{theorem}
\label{sec-est}
For any choice of co-efficient vector $w$ with $|w_i|\leq 1$ and a very small choice of $s_k$ for large $k$, the section $s=s_w$ satisfies 
\[
\begin{array}{rcl}
|s| &\leq & C\\
|\bar{\partial}_E s|&\leq & C(k^{-1/2}+k^{-1})\\
|\triangledown_W\bar{\partial}_E s| &\leq & C(k^{-1/2}+k^{-1})\\
\end{array}
\]
\end{theorem}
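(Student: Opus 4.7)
The plan is to deduce the theorem from the pointwise estimates for individual $\sigma_p$ assembled in the preceding paragraphs, combined with a bounded--overlap argument for the cover of $W$ by the supports of the $\sigma_i$. The first step is to verify that at every $q\in W$ only a bounded number $N$ of the $\sigma_i$ are nonzero, with $N$ independent of $k$. In each rescaled chart $\tilde{\chi}_{p,k}$ the box $I^{2n}_{k^{-1/2}+\epsilon_k,\,1+\epsilon_k}$ has half-widths of size $k^{-1/2}$ in the $x$-directions and $1$ in the $y$-directions, while the lattice $\Lambda=\alpha_k\mathbb{Z}^n+i\mathbb{Z}^n$ is chosen with $\alpha_k\sim k^{-1/2}$ in $x$ and spacing $1$ in $y$. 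Hence at most a constant number of translated boxes contain any given point, and combining this with the finite cover $\{\phi_s(O''_s)\}_{s=1}^S$ of $W$ gives the desired bounded overlap $N$. Consequently, for any pointwise quantity $F(\sigma_i)$,
\[
\Bigl|\Sigma_i w_iF(\sigma_i)(q)\Bigr|\le N\max_i|F(\sigma_i)(q)|,
\]
which reduces the three bounds on $s$ to bounds on the individual $\sigma_p$.

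Next I apply the estimates already derived. On the support of $\beta_k$ one has $|z|\le C$ (since $|x_j|\le k^{-1/2}+\epsilon_k$ and $|y_j|\le 1+\epsilon_k$), so the model section $e^{-(Re\Sigma z_j^2)/4}+i$ together with all Gaussian and polynomial factors appearing in the derived formulas are uniformly bounded. With $|\beta_k|\le s_k\le 1$ this immediately gives $|\sigma_p|\le C$, whence $|s|\le NC$. For the two $\bar{\partial}$-bounds I substitute $|z|\le C$ into the expressions for $|\bar{\partial}_{A,\tilde{J}}\beta_k(e^{-(Re\Sigma z_j^2)/4}+i)|$ and $|\triangledown\bar{\partial}_{A,\tilde{J}}\beta_k(e^{-(Re\Sigma z_j^2)/4}+i)|$ listed immediately before the theorem. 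Each remaining term then carries at least one factor of $s_k$, $|\triangledown\beta_k|\le Ck^{-1}$, $|\triangledown\triangledown\beta_k|\le Ck^{-1}$, $k^{-1/2}s_k$, or $k^{-1}$. Invoking the freedom to replace $s_k$ by $k^{-1/2}s_k$ explicitly remarked in the paper, I take $s_k\le k^{-1/2}$, so every remaining term fits inside $C(k^{-1/2}+k^{-1})$.

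Multiplying the individual estimates by the overlap constant $N$ yields the three claimed inequalities. The only delicate point is the uniform independence of $N$ from $k$: since $M\sim k^{n/2}$, without uniform bounded overlap the sum defining $s$ could not retain the individual model bounds. Everything else is mechanical substitution into the identities already computed in the preceding displays.
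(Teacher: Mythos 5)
Your proposal is correct and follows the route the paper intends: the theorem is presented there as an immediate consequence of the displayed local estimates for $|\bar{\partial}_{A,\tilde{J}}\beta_k(e^{-(Re\Sigma z_j^2)/4}+i)|$ and its derivative, summed over the lattice of charts. You have merely made explicit the two details the paper leaves tacit — the $k$-independent bounded overlap of the supports of the $\sigma_i$, and the substitution of $|z|\le C$ together with $s_k\le k^{-1/2}$ into those estimates — which is a welcome clarification but not a different argument.
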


Now we shall recall a result from \cite{Donaldson}.

\begin{theorem}(\cite{Donaldson})
\label{Don-symp}
If $a',a'':\mathbb{C}^n\to \mathbb{C}$ are respectively complex linear and anti-linear maps and if $|a''|<|a'|$, then the subspace $ker(a'+a'')\subset \mathbb{C}^n$ is symplectic.
\end{theorem}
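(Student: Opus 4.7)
I would prove this by passing to the symplectic orthogonal. Since the hypothesis $|a''|<|a'|$ forces $a'\neq 0$, the $\mathbb{R}$-linear map $a=a'+a''\colon\mathbb{C}^n\to\mathbb{C}$ is surjective, and so $V=\ker a$ has real codimension $2$, while its symplectic orthogonal $V^{\omega_0}$ has real dimension $2$. The standard equivalence $V$ symplectic $\Longleftrightarrow V\cap V^{\omega_0}=0 \Longleftrightarrow V^{\omega_0}$ symplectic reduces the problem to showing that $\omega_0$ does not vanish on the $2$-plane $V^{\omega_0}$.

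Set $\alpha=\mathrm{Re}\,a$, $\beta=\mathrm{Im}\,a$, so $V=\ker\alpha\cap\ker\beta$, and let $X_\alpha,X_\beta\in\mathbb{C}^n$ be the unique vectors defined by $\iota_{X_\alpha}\omega_0=\alpha$ and $\iota_{X_\beta}\omega_0=\beta$. These span $V^{\omega_0}$, so the task becomes to verify $\omega_0(X_\alpha,X_\beta)\neq 0$.

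The main step is an elementary computation. Writing $a=\sum c'_j z_j+\sum c''_j\bar z_j$ in the standard Darboux coordinates $z_j=x_j+iy_j$ with $\omega_0=\sum dx_j\wedge dy_j$, the real and imaginary parts of $a$ split as $\alpha=\sum[(\mathrm{Re}\,c'_j+\mathrm{Re}\,c''_j)dx_j+(\mathrm{Im}\,c''_j-\mathrm{Im}\,c'_j)dy_j]$ and similarly for $\beta$. Reading off the components of $X_\alpha$ and $X_\beta$ in the $(\partial_{x_j},\partial_{y_j})$-basis and expanding $\omega_0(X_\alpha,X_\beta)$ gives, after cancellation,
\[
\omega_0(X_\alpha,X_\beta)=\sum_j\bigl(|c'_j|^2-|c''_j|^2\bigr)=|a'|^2-|a''|^2,
\]
where $|a'|,|a''|$ denote the operator norms. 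By assumption this quantity is strictly positive, hence $V^{\omega_0}$ is symplectic, and therefore so is $V$.

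The only real obstacle is the bookkeeping of signs: one has to keep the complex-linear and complex-antilinear contributions straight so that the cross-terms between $c'_j$ and $c''_j$ drop out and one is left with the difference $|a'|^2-|a''|^2$ with the correct sign. Conceptually the statement merely makes quantitative the familiar fact that the kernel of a $\mathbb{C}$-linear functional on $\mathbb{C}^n$ is a complex hyperplane, and hence automatically symplectic, and that a $\mathbb{C}$-antilinear perturbation dominated in norm cannot destroy this nondegeneracy.
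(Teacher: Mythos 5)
Your proof is correct. Note that the paper itself supplies no argument for this statement: it is quoted verbatim from Donaldson and used as a black box, so there is nothing internal to compare against. Your route through the symplectic orthogonal is a clean, self-contained verification: writing $a'=\sum c'_jz_j$, $a''=\sum c''_j\bar z_j$ and computing the Hamiltonian-type vectors of $\alpha=\mathrm{Re}\,a$, $\beta=\mathrm{Im}\,a$ does give
\[
\omega_0(X_\alpha,X_\beta)=\sum_j\bigl(|c'_j|^2-|c''_j|^2\bigr)=|a'|^2-|a''|^2>0,
\]
and since $V^{\omega_0}=\mathrm{span}(X_\alpha,X_\beta)$, nondegeneracy of $\omega_0$ on $V^{\omega_0}$ is equivalent to nondegeneracy on $V=\ker a$. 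One small repair: your opening justification of surjectivity is not quite right as stated. The condition $a'\neq 0$ alone does not make $a=a'+a''$ surjective as a real-linear map (take $n=1$, $a'=z$, $a''=\bar z$, whose image is $\mathbb{R}$); what rules this out is the strict inequality $|a''|<|a'|$, since an image contained in a real line forces $e^{-i\theta}a'=\overline{e^{-i\theta}a''}$ and hence $|a'|=|a''|$. In fact your final inequality already implies this for free: if $\alpha$ and $\beta$ were dependent then $X_\alpha$ and $X_\beta$ would be parallel and $\omega_0(X_\alpha,X_\beta)$ would vanish, so the positivity you establish simultaneously certifies that $\ker a$ has real codimension $2$. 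With that sentence adjusted, the argument stands.
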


In view of \ref{sec-est} and \ref{Don-symp} if we prove the following result then we shall prove that the zero locus of $s$ is a symplectic submanifold as in \cite{Donaldson}.

\begin{theorem}
\label{fin-est}
There is an $\epsilon>0$ such that for all large $k$ we can choose $w$ with $|w_i|\leq 1$, so that $s=s_w$ satisfies the transversality condition \[|\partial s|>\epsilon\] on the zero locus of $s$ i.e $\mathcal{Z}(s)$.
\end{theorem}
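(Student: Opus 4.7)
The plan is to follow Donaldson's globalization scheme from \cite{Donaldson}, adapted to the modified local model $\sigma=e^{-(\mathrm{Re}\Sigma z_j^2)/4}+i$ employed here. The argument proceeds in three stages: (i) a local transversality estimate for each $\sigma_i$ near $p_i$; (ii) Donaldson's quantitative (``effective Sard'') transversality lemma for approximately holomorphic sections; (iii) a finite inductive globalization over the lattice points $p_1,\ldots,p_M$ in which the coefficients $w_i$ are adjusted one at a time.

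For stage (i), the explicit formula for $\partial_A(e^{-(\mathrm{Re}\Sigma z_j^2)/4}+i)$ computed earlier, combined with the uniform bounds on $\tilde{\chi}$ and its derivatives, yields $|\partial \sigma_i|\geq c_0|z|$ on the support of $\beta_k$, with $c_0$ independent of $k$ and $i$. The crucial observation is that $\sigma_i(p_i)=s_k(1+i)\neq 0$, so any zero of $s_w$ is a positive distance from the centers of the interior of the charts containing it; together with the decay of $\beta_k$ near the boundary of the chart this forces $\mathcal{Z}(s_w)$ to lie in a region where the lower bound above is effective, giving a uniform lower bound on $|\partial \sigma_i|$ at points of $\mathcal{Z}(s_w)$ inside the $i$-th chart.

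For stages (ii) and (iii), I would invoke Donaldson's quantitative transversality lemma. By Theorem~\ref{sec-est}, $s_w$ is approximately holomorphic with $|\bar{\partial}_E s_w|,\,|\triangledown\bar{\partial}_E s_w|\leq C(k^{-1/2}+k^{-1})$, so the lemma allows one to perturb each $w_i$ within the closed unit disk to achieve $|\partial s_w|>\eta$ on $\mathcal{Z}(s_w)$ inside the chart around $p_i$, with $\eta$ a controlled function of $k$. Induction on $i=1,\ldots,M$ globalizes this: the Gaussian-type decay of $\sigma_j$ outside its own chart ensures that adjusting $w_i$ at step $i$ affects the transversality bound already achieved near $p_{i'}$ with $i'<i$ only by an exponentially small amount, and the bounded multiplicity of the covering by the $\Lambda_s$ closes the induction with a uniform $\epsilon>0$.

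The main obstacle will be the interaction of the induction with the preassigned singularity on $L$. The coefficients $w_i$ corresponding to the lattice points $p_i\in L$ must be held fixed (for instance at $w_i=1$) so that the sections $\sigma_i$ based at these points continue to deliver the desired degenerate behavior of $s_w$ along $L$; the Donaldson induction must therefore be run only over the off-$L$ points. One must then verify both that (a) perturbing only the off-$L$ coefficients still produces enough freedom for the quantitative Sard lemma to succeed on each chart, which is true because any zero of $s_w$ lies in at least one off-$L$ chart in whose interior the on-$L$ contributions $w_i\sigma_i$, $p_i\in L$, are essentially holomorphic perturbations of order $s_k$; and (b) that $\mathcal{Z}(s_w)$ does not accumulate along $L$, which follows from the non-vanishing of the dominant on-$L$ contribution $s_k(1+i)$ near $L$. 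Once this is secured, Theorems~\ref{sec-est} and \ref{Don-symp} combine to give that $\mathcal{Z}(s_w)$ is a symplectic submanifold, completing the construction of the desired $s_k$.
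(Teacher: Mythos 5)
Your overall strategy (local model, quantitative Sard lemma, globalization over the lattice of centers) is the right family of ideas, but the globalization step as you describe it fails, and it is precisely there that the paper's proof does something you have omitted. You propose an induction ``on $i=1,\ldots,M$'' in which the coefficients are adjusted one at a time. The number $M$ of lattice points grows with $k$ (roughly like $k^{n}\cdot\mathrm{vol}(W)$), and at each step of such an induction the transversality constant necessarily degrades: to avoid destroying the transversality already achieved you must take the new perturbation of size $\delta_i\leq\tfrac{1}{2}\eta_{i-1}$, and the quantitative transversality lemma then only returns $\eta_i= Q_p(\delta_i)\delta_i\ll\delta_i$ on the new chart. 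Iterating $M(k)\to\infty$ times drives the constant to $0$ faster than the admissible threshold $\sigma\leq Q_p(\delta)\delta$ with $\sigma\sim k^{-1/2}$ allows, so no uniform $\epsilon>0$ survives. The paper avoids this exactly as Donaldson does: Theorem~\ref{index} partitions the centers into $N=N(D)$ colour classes of mutually $D$-separated points with $N$ \emph{independent of} $k$, and all coefficients in one class are adjusted \emph{simultaneously} at a single stage; Lemma~\ref{f-est-2} justifies the simultaneous adjustment because for large $D$ the bump sections of same-coloured centers have disjoint supports ($|\sigma_j|=0=|\triangledown\sigma_j|$ on the other charts). The induction then has only $N$ stages and terminates with a uniform $\eta_N>0$. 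Your appeal to ``bounded multiplicity of the covering'' and to ``Gaussian-type decay'' does not substitute for this: the local model $e^{-(\mathrm{Re}\Sigma z_j^2)/4}$ actually \emph{grows} in the $y$-directions (which is why the charts are elongated and the hard cutoff $\beta_k$ is essential), so the cross-chart interaction for adjacent charts is of order one, not exponentially small, and only the colouring controls it.

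Your treatment of the preassigned singularity also diverges from the paper in a way that leaves gaps. The paper does not freeze the coefficients at all on-$L$ centers at $1$; it starts from $w_0=(c+ic,0,\ldots,0)$ with a \emph{single} nonzero coefficient at $p_1\in L$, and then allows every coefficient to be perturbed, but constrains the perturbations $v_i$ to lie in the half-disc $\{|w|\leq\rho,\ \mathrm{Im}(w)\leq 0\}$ (indeed on the specific rays $-(a+ia)$ and $-a+ia$); the modified Theorem~\ref{QTT} is proved by comparing the area of the half-disc with the area of the bad discs, and the sign constraint is what later guarantees the single hyperbolic critical point on $L$ and the control of the stable manifold. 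Freezing $w_i=1$ at every $p_i\in L$ would both create a Lefschetz critical point near each such center (violating the requirement of one non-degenerate singularity on $L$) and leave no mechanism for achieving transversality of $s_w$ over the on-$L$ charts, since in this scheme transversality over a chart is obtained by perturbing that chart's own coefficient. Finally, your stage (i) lower bound $|\partial\sigma_i|\geq c_0|z|$ is not the relevant local input: transversality of the individual $\sigma_i$ says nothing about transversality of the sum $s_w$; what is needed is Lemma~\ref{f-est-1}, i.e.\ that $f_i=s_w/\sigma_i$ is bounded and approximately holomorphic so that the quantitative Sard lemma applies to it.
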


\begin{remark}
We shall further need that $w_i=c+ic,\ or\ -c+ic$ for $c>0$ and shall also see that such a choice is possible in this context.
\end{remark}

Now we shall prove \ref{fin-est} which similar to \cite{Donaldson} (not identical though).

\begin{theorem}(\cite{Donaldson})
\label{index}
Given any $D>0$ there is a number $N=N(D)$, independent of $k$, such that for any large $k$ we can choose a collection of centers $p_i$ satisfying the conditions of the proof of \ref{sec-est} in addition with the property that there is a partition of the set $\Lambda=\{1,...,M\}$ into $N$ disjoint subsets $\Lambda=\Lambda_1\cup ...\cup\Lambda_N$ such that for each $\alpha$ \[d(p_i,p_j)\geq D\ if\ p_i,p_j\in \Lambda_{\alpha}\]
\end{theorem}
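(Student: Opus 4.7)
The plan is to reduce the statement to a packing estimate for the collection $\{p_i\}$ in the rescaled metric $g_k=kg$ and then invoke a greedy graph-coloring argument. Concretely, I would build the \emph{conflict graph} $G$ on the vertex set $\{1,\ldots,M\}$ whose edges join pairs $(i,j)$ with $d(p_i,p_j)<D$, verify that $G$ has maximum degree bounded by some $N'(D)$ independent of $k$, and then apply the standard greedy vertex-coloring to produce a proper coloring using $N(D):=N'(D)+1$ colors. Taking $\Lambda_1,\ldots,\Lambda_N$ to be the color classes yields the desired partition with $d(p_i,p_j)\geq D$ whenever $p_i,p_j\in\Lambda_\alpha$ for the same $\alpha$.

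The core of the argument is the density estimate, which I would carry out in the source of any fixed chart $\phi_s:O_s\to W$. In $\mathbb{C}^n$ the lattice $\Lambda=\alpha_k\mathbb{Z}^n+i\mathbb{Z}^n$ has Euclidean spacing $\alpha_k\sim k^{-1/2}$ in the $x$-directions and spacing $1$ in the $y$-directions. Using the bi-Lipschitz constants $(1/2,2)$ of $\phi_s$ with respect to $g$, together with the fact that $g_k$-distances are $\sqrt{k}$ times $g$-distances, a $g_k$-ball of radius $D$ around any $p_i\in\Lambda_s$ pulls back under $\phi_s$ into a Euclidean ball of radius at most $2D/\sqrt{k}$ in $\mathbb{C}^n$. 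In each $x$-direction this ball contains at most $(2D/\sqrt{k})/\alpha_k=O(D)$ lattice steps, while in each $y$-direction its radius $2D/\sqrt{k}$ drops below the $y$-spacing $1$ as soon as $k>4D^2$, permitting at most one lattice $y$-coordinate per direction. Hence for all sufficiently large $k$ the ball contains at most $C_nD^n$ lattice points of $\Lambda$ from any single chart, and summing over the $S$ charts in the fixed cover gives $N'(D):=SC_nD^n$ as the desired bound.

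With the degree bound in hand the greedy coloring finishes the proof. The main obstacle is the density estimate itself: the lattice is strongly asymmetric between the $x$- and $y$-directions, and the key phenomenon that keeps the ball-count $O(D^n)$ rather than growing with $k$ is that the unit spacing in $y$ becomes spacing of order $\sqrt{k}$ in the $g_k$ metric, so that only one $y$-layer contributes to a $g_k$-ball of radius $D$ once $k\gg D^2$. The remaining ingredients, namely the existence of the fixed finite cover $\{\phi_s\}$ and the greedy coloring argument, are entirely standard.
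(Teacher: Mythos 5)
The paper does not actually prove this statement: it is quoted verbatim as a result of Donaldson and then simply used, so there is no internal proof to compare against. Your argument is, in substance, Donaldson's original one --- observe that the centers form a uniformly separated net at scale $1$ in the rescaled metric $g_k$, deduce a packing bound (at most $N'(D)$ other centers within $g_k$-distance $D$ of any given one, with $N'(D)$ independent of $k$), and finish by greedy $(N'(D)+1)$-coloring of the conflict graph --- and it is correct. The only place where you had to make an interpretive choice is the density estimate, since the paper is ambiguous about which coordinates and which metric the lattice $\alpha_k\mathbb Z^n+i\mathbb Z^n$ and the bi-Lipschitz bounds for $\phi_s$ refer to. Under your reading ($x$-spacing $\alpha_k\sim k^{-1/2}$, $y$-spacing $1$, both measured against $g$) you get the anisotropic count $O(D^n)$; under the isotropic reading of Donaldson's original construction one gets $O(D^{2n})$. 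Either constant is independent of $k$, which is all the theorem requires, so this does not affect the validity of your proof. Two minor points worth a sentence each if you write this up: (i) the paper additionally perturbs those centers whose cubes meet $L$ so that they lie on $L$; this moves each center by at most the $g_k$-diameter of a cube, i.e.\ by $O(1)$, so it only changes the constant $C_n$ in your degree bound; (ii) since $\Lambda_s=\phi_s(\Lambda\cap O_s')$ and the charts overlap, distinct indices $i\neq j$ may label nearby or coincident points --- your conflict-graph formulation handles this automatically, but the per-chart count must be summed over the $S$ charts exactly as you do.
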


So now we assume that $D$ is fixed and $p_i$'s accordingly as well as the partition $\{\Lambda _{\alpha}\}$. The choice of $D$ will be made at the end of this section. We think of this partition $\{\Lambda_{\alpha}\}$ as the colouring of the cubes $I^{2n}_{k^{-1/2}+\epsilon_k,1+\epsilon_k}$.\\

We shall adjust now the co-efficients $w_i$'s belonging to the same $\Lambda_{\alpha}$ starting with $\Lambda_1$ with the co-efficient vector $w_0=(w_1,0,...,0),\ w_1=c+ic,\ c>0$ where $p_1\in L$. Here it is different from \cite{Donaldson}. In the next stage we shall change the co-efficients $w_i$'s belonging to $\Lambda_2$ leaving the others unchanged and proceed this way. Thus the total number $N$ of stages in this construction depends on $D$ but not on $k$.\\

Now we can focus on the adjustment criterion for $w_i$'s. The goal is to achieve the controlled transversality over the cubes belonging to $\Lambda_{\alpha}$ at the $\alpha$-th stage. We set \[W_{\alpha}=\cup_{i\in \Lambda_{\beta},\beta \leq \alpha} I^{2n}_i\] where $I^{2n}_i$ is the $i$-th cube  $I^{2n}_{k^{-1/2}+\epsilon_k,1+\epsilon_k}$. So $\Phi(empty)=W_0\subset ...\subset W_N=W$. We want that for suitable $\epsilon$, the section $s_{\alpha}$ satisfies $|\partial s_{\alpha}|>\epsilon$ on $\mathcal{Z}(s_{\alpha})\cap W_{\alpha}$. So the criterion can be summarized as 
\begin{enumerate}
\item The change in the co-efficient belonging to $\Lambda_{\alpha}$ must achieve controlled transversality over the cubes belonging to $\Lambda_{\alpha}$.\\
\item The change in the co-efficients belonging to $\Lambda_{\alpha}$ must not destroy the controlled transversality that has already been achieved over the cubes belonging to $\Lambda_{\beta}$ for $\beta < \alpha$.\\ 
\end{enumerate}

So the precise form of transversality we need is the following 
\begin{definition}(\cite{Donaldson})
\label{transversality}
Let $f:U\to \mathbb{C}$ be a smooth map on an open set $U\subset \mathbb{C}$ and let $w\in \mathbb{C}$. For $\eta >0$ we say that $f$ is $\eta$-transverse to $w$ over $U$ if for any $z\in U$ such that $|f(z)-w|\leq \eta$ the derivative satisfies $|(\partial f)_z|\geq \eta$.
\end{definition}

\ref{transversality} is stable under $C^1$-perturbations, moreover if $f$ is $\eta$-transverse to $w$ and $g:U \to \mathbb{C}$ is such that $|f-g|_{C^1}\leq \delta$ then $g$ is $\eta-\delta$-transverse to $w$.\\

  Now we consider any section $s$ of $E^k\to W$. On a fixed chart around say $p_i$, $E^k\to W$ has a standard trivialization given by $\sigma_i$. We write $s=f_i\sigma_i$, for a function $f_i$ defined on the cube $I^{2n}_{k^{-1/2}+\epsilon_k,1+\epsilon_k}$. We say that the section $s$ is $\eta$-transverse on the chart if $f_i$ is $\eta$-transverse to $0$ on the cube $I^{2n}_{k^{-1/2},1}$.
  
  \begin{lemma}
  \label{f-est-1}
  If $s=s_{w}$ is the section of $E^k$ as described in \ref{sec-est} and $f_i$'s the corresponding function then 
  \begin{enumerate}
  \item $|f_i|_{C^1(I^{2n}_{k^{-1/2},1})}\leq C$\\
  \item $|\bar{\partial}f_i|_{C^1(I^{2n}_{k^{-1/2},1})}\leq C(k^{-1/2}+k^{-1})$\\
  \item If $|\partial f_i|>\epsilon$ on $f_i^{-1}(0)\cap I^{2n}_{k^{-1/2},1}$, then for large $k$, $|\partial_E s|>C^{-1}\epsilon$ on the intersection of $\mathcal{Z}(s)$ and the chart.
  \end{enumerate}
  \end{lemma}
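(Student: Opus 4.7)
The plan is to treat all three parts via the local coefficient $f_i$ representing the section $s$ in the trivialization of $E^k$ coming from the chart $\tilde{\chi}_{p_i,k}$; in this trivialization $\sigma_i$ corresponds to the function $\beta_k(z)(e^{-(\text{Re}\Sigma z_j^2)/4}+i)$, and $s$ corresponds to $f_i$. On the cube $I^{2n}_{k^{-1/2},1}$ only finitely many $\sigma_j$ contribute (by the bounded multiplicity of the cover), each compactly supported with bounded $C^1$-norm, and $|w_j|\leq 1$, so part (1) is immediate from Theorem \ref{sec-est}. In fact the sharper estimate $|f_i|\leq Cs_k$ holds because of the factor $\beta_k$, and this will be used for part (2).

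For part (2), I would exploit the identity
\[
\bar{\partial} f_i \;=\; \bar{\partial}_{A,\tilde{J}} f_i \;-\; A^{0,1} f_i \;-\; \tilde{\mu}\bigl(\partial f_i + A^{1,0} f_i\bigr),
\]
in which $\bar{\partial}_{A,\tilde{J}} f_i$ is the local representation of $\bar{\partial}_E s$. The first summand is bounded by $|\bar{\partial}_E s|\leq C(k^{-1/2}+k^{-1})$ from Theorem \ref{sec-est}. The corrections involving $A$ use $|A|\leq C|z|\leq C$ on the cube together with the refined estimate $|f_i|\leq Cs_k$ (and the flexibility to take $s_k\leq k^{-1/2}$ noted in the text). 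The term with $\tilde{\mu}$ uses $|\tilde{\mu}|\leq Ck^{-1/2}$. Differentiating once and invoking the $\nabla_W \bar{\partial}_E s$ bound of Theorem \ref{sec-est} together with $|\nabla\tilde{\mu}|\leq Ck^{-1/2}$ and $|\nabla A|\leq C$ yields the $C^1$-estimate.

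For part (3), note that on $\mathcal{Z}(s)\cap\tilde{\chi}_{p_i,k}(I^{2n}_{k^{-1/2},1})$ we have $f_i=0$, so the analogous expansion for the $(1,0)$-part collapses to $\partial_E s = \partial f_i + \bar{\tilde{\mu}}\bar{\partial} f_i$. Hence
\[
|\partial_E s|\;\geq\;|\partial f_i|\;-\;|\tilde{\mu}|\cdot|\bar{\partial} f_i|\;>\;\epsilon \;-\; Ck^{-1/2}(k^{-1/2}+k^{-1}),
\]
which exceeds $C^{-1}\epsilon$ once $k$ is large. The main obstacle throughout is the careful bookkeeping of the $k^{1/2}$-rescaling on the chart and its interaction with the cutoff weights $s_k$ and the decay of $\tilde{\mu}$; in particular, verifying that contributions from neighboring $\sigma_j$ (for $j\neq i$) on $I^{2n}_{k^{-1/2},1}$ do not spoil the refined bound $|f_i|\leq Cs_k$ needed for part (2).
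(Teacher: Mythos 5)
The paper does not actually write out a proof of this lemma; it defers to Lemma 18 of \cite{Donaldson}, whose argument is the Leibniz rule for the coupled operator applied to $s=f_i\sigma_i$: one gets $\bar{\partial}_{\tilde J}f_i=\bigl(\bar{\partial}_E s-f_i\,\bar{\partial}_E\sigma_i\bigr)/\sigma_i$, and then both numerator terms are $O(k^{-1/2}+k^{-1})$ by \ref{sec-est} (applied to $s$ and to the single reference section $\sigma_i$), while $|f_i|\leq C$ and $|\sigma_i|$ is bounded below on the inner cube. Your part (2) deviates from this at a point where the deviation matters. The identity you start from treats $f_i$ itself as the local representative of $s$ in the trivialization whose connection matrix is $A$; it is not. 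In that trivialization $s$ is represented by $f_i\cdot h$ with $h=\beta_k(e^{-(\mathrm{Re}\Sigma z_j^2)/4}+i)$ the representative of $\sigma_i$, so $\bar{\partial}_{A,\tilde J}f_i$ is not the local form of $\bar{\partial}_E s$. As a result your expansion omits the term $f_i\,\bar{\partial}_E\sigma_i/\sigma_i$ (which is present and must be estimated, harmlessly, via \ref{sec-est}) and instead leaves behind a bare term $A^{0,1}f_i$ that should not be there at all: in the correct computation the connection contributions cancel between numerator and denominator of the quotient.

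That spurious term is then the source of the second, independent error: to absorb $A^{0,1}f_i$ you invoke a ``refined estimate'' $|f_i|\leq Cs_k$. This is false. Writing $f_i=w_i+\sum_{j\neq i}w_j\sigma_j/\sigma_i$, the cutoff factors $\beta_k$ appear in both numerator and denominator of each ratio and cancel, and the diagonal term is just $w_i$, which has modulus up to $1$ (indeed $w_1=c+ic$ with $c>0$ fixed). So $|f_i|$ is genuinely of order $1$, and since $|z|$ reaches order $1$ on $I^{2n}_{k^{-1/2},1}$ (the $y$-directions have length $\sim 1$), $|A^{0,1}f_i|$ is $O(1)$, not $O(k^{-1/2})$. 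Your argument for (2) therefore does not close; it is repaired precisely by replacing your identity with the quotient rule above. Parts (1) and (3) are essentially sound: on $\mathcal{Z}(s)$ one has $f_i=0$, so the terms proportional to $f_i$ drop out and $\partial_E s=(\partial_{\tilde J}f_i)\,\sigma_i$; your display for (3) is missing the multiplicative factor $\sigma_i$, and the lower bound for $|\sigma_i|$ on the inner cube is exactly where the constant $C^{-1}$ in the conclusion comes from, so you should make that factor explicit.
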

  In view of \ref{sec-est}, the proofs of \ref{f-est-1} is same as the proofs of Lemma-18 from \cite{Donaldson}.
  
  \begin{lemma}
  \label{f-est-2}
  If $s=s_w$ be as in \ref{sec-est}, for any $\alpha$, let $w'$ be another vector which agrees with $w$ except for the co-efficients belonging to $\Lambda_{\alpha}$ and suppose that $|w'_j-w_j|\leq \delta$ for $j\in \Lambda_{\alpha}$. Set $s'=s_{w'}$ and let $f_i,f'_i$ be the functions representing these sections, then for large $D$
  \begin{enumerate}
  \item For any $i$, $|f'_i-f_i|_{C^1(I^{2n}_{k^{-1/2},1})}\leq \delta$\\
  \item If $i\in \Lambda_{\alpha}$ and $w'_i=w_i+\theta_i$ then $|f'_i-f_i-\theta_i|_{C^1(I^{2n}_{k^{-1/2},1})}=0$\\
\end{enumerate}   
  \end{lemma}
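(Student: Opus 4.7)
The plan is to unpack the definitions and harness the colouring from Theorem~\ref{index}. In the chart around $p_i$, with $\sigma_i$ serving as a trivialization of $E^k$, expanding $s_{w} = \sum_j w_j \sigma_j = f_i \sigma_i$ and subtracting gives
\[
f'_i - f_i \;=\; \sum_{j \in \Lambda_{\alpha}} \theta_j \cdot \frac{\sigma_j}{\sigma_i}
\]
on the cube $I^{2n}_{k^{-1/2},1}$, where $\theta_j = w'_j - w_j$. The first step is to argue that at most one summand is non-zero on this cube. Each $\sigma_j$ is supported inside $\tilde{\chi}_j(I^{2n}_{k^{-1/2}+\epsilon_k,1+\epsilon_k})$, whose $W$-diameter is of order $\sqrt{n}$ (coming from the $y$-extent), and the reference cube $\tilde{\chi}_i(I^{2n}_{k^{-1/2},1})$ has comparable diameter. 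Choosing $D$ larger than twice this diameter forces that two distinct indices $j_1, j_2 \in \Lambda_{\alpha}$ whose supports both reach the reference cube would satisfy $d(p_{j_1},p_{j_2}) < D$, contradicting the partition property.

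For part (2), assuming $i \in \Lambda_{\alpha}$: every $j \in \Lambda_{\alpha}$ with $j \neq i$ is at distance $\geq D$ from $p_i$, so its support misses the reference cube and only $j = i$ contributes. Then $f'_i - f_i = \theta_i \cdot (\sigma_i/\sigma_i) = \theta_i$ as a constant function on the cube, giving $|f'_i - f_i - \theta_i|_{C^1} = 0$ immediately.

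For part (1), if $i \in \Lambda_{\alpha}$ the bound $|f'_i - f_i|_{C^1} = |\theta_i| \leq \delta$ is read off from (2). If $i \notin \Lambda_{\alpha}$ the single contributing index (if any) is some $j \in \Lambda_{\alpha}$ whose support meets the reference cube, and there $f'_i - f_i = \theta_j \cdot \sigma_j/\sigma_i$. I would then control $|\sigma_j/\sigma_i|_{C^1(I^{2n}_{k^{-1/2},1})}$ by a universal constant: the $s_k$-factors inside $\beta_k^{(j)}$ and $\beta_k^{(i)}$ cancel in the ratio, the reference section $e^{-Re\Sigma z_\ell^2/4}+i$ has $C^1$-norm uniformly bounded in $k$ on the cube, and the unitary transition function between the two connection-preserving trivializations of $E^k$ carries a $C^1$-bound controlled by the curvature and by the separation $D$. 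Absorbing this universal constant into the choice of $s_k$ (or, equivalently, rescaling the prescribed magnitudes of the $w_j$'s) secures the stated bound $\leq \delta$.

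The main obstacle in this plan is precisely the last estimate: a uniform $C^1$-control of the transition ratio $\sigma_j/\sigma_i$ on the reference cube, uniformly in $k$, $i$, and $j$, when the two charts are centred at points of separation of order $D$. Once it is in place everything reduces to bookkeeping, the substantive role of ``large $D$'' being to shrink the displayed sum to at most one term per cube, from which (2) is tautological and (1) follows from the single-term bound.
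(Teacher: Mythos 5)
Your proposal is essentially the paper's own proof: the same expansion $f'_i-f_i-\theta_i=\sum_{j\in\Lambda_\alpha,\,j\neq i}(w'_j-w_j)\,\sigma_j/\sigma_i$ (and its gradient), followed by the observation that for large $D$ the compactly supported $\sigma_j$ and $\nabla\sigma_j$ vanish on the reference cube $\tilde\chi_i(I^{2n}_{k^{-1/2},1})$, so the sum dies. The one point where you diverge is that you are more careful than the paper. For part (2), and for part (1) when $i\in\Lambda_\alpha$, the mutual $D$-separation inside $\Lambda_\alpha$ from \ref{index} really does kill every term, exactly as you say. But for part (1) with $i\notin\Lambda_\alpha$ the paper's blanket assertion that $|\sigma_j|=|\nabla\sigma_j|=0$ on the $i$-th cube for all $j\in\Lambda_\alpha$ is not justified: the separation hypothesis says nothing about $d(p_i,p_j)$ when $i\notin\Lambda_\alpha$, and since the cubes cover $W$ some $j\in\Lambda_\alpha$ will in general be adjacent to $p_i$. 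You correctly isolate this single surviving term $\theta_j\,\sigma_j/\sigma_i$ and identify the needed ingredient as a uniform $C^1$ bound on the transition ratio; the paper supplies nothing here. Two remarks on your proposed fix: rescaling $s_k$ cannot help, since (as you note yourself) the $s_k$-factors cancel in $\sigma_j/\sigma_i$, so what is required is a genuine bound $|\sigma_j/\sigma_i|_{C^1}\leq C$ with $C$ then either shown to be $\leq 1$ or propagated into the constants of the transversality scheme (this is exactly the content of Donaldson's Lemma 19, where Gaussian decay of the reference sections gives summable off-diagonal contributions); and one should be aware that the denominator $\sigma_i$ is only bounded below on the region where $\beta_k^{(i)}$ has not yet begun to cut off, a defect of the paper's choice of trivialization rather than of your argument.
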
 
  
  \begin{proof}
  We write $s=\Sigma_jw_j\sigma_j$. So on $i$-th smaller cube  $I^{2n}_{k^{-1/2},1}$ our equation $s=f_i\sigma_i$ gives us $\Sigma_{j\neq i}w_j\sigma_j+w_i\sigma_i=f_i\sigma_i$ and similarly $\Sigma_{j\neq i}w_j'\sigma_j+w_i'\sigma_i=f'_i\sigma_i$. So we get 
\[  
  \begin{array}{rcl}
 f'_i-f_i-\theta_i &=& w'_i-w_i-\theta_i+ \Sigma_{j\neq i}\frac{\sigma_j}{\sigma_i}(w'_j-w_j)\\
\Rightarrow |f'_i-f_i-\theta_i| &\leq & \Sigma_{j\neq i}|w'_j-w_j||\frac{\sigma_j}{\sigma_i}|,\ As\ w_i'-w_i-\theta_i=0\\  
  \end{array}
  \]
  Similarly $|f'_i-f_i| \leq \delta+ \Sigma_{j\neq i}|w'_j-w_j||\frac{\sigma_j}{\sigma_i}|$. Now considering the derivative we get 
\[
\begin{array}{rcl}
\triangledown (f'_i-f_i-\theta_i)&=&\Sigma_{j\neq i}(w'_j-w_j)\triangledown (\frac{\sigma_j}{\sigma_i})\\
&=& \Sigma_{j\neq i}(w'_j-w_j)[\frac{1}{\sigma_i}\triangledown \sigma_j -\frac{\sigma_j}{\sigma_i^2}\triangledown \sigma_i]\\
\Rightarrow |\triangledown (f'_i-f_i-\theta_i)| &\leq &\Sigma_{j\neq i} |w_j'-w_j|[\frac{|\triangledown \sigma_j|}{|\sigma_i|}+\frac{|\sigma_j|}{|\sigma_i^2|}|\triangledown \sigma_i|]\\
\end{array}
\]  
  Now as $w'_j=w_j$ for $j \notin \Lambda_{\alpha}$, the summations $\Sigma_{j\neq i}$ in the above estimates can be replaced by $\Sigma_{j\in \Lambda_{\alpha}}$. So when $D$ is large $|\sigma_j|=0=|\triangledown \sigma_j|$ on the chart $\tilde{\chi}_i(I^{2n}_{k^{-1/2},1})$.  
  \end{proof}
  
Now define $Q_p(\delta)=\log(\delta^{-1})^{-p},\ \delta>0$. The next result (similar to Theorem-20 of \cite{Donaldson}) gives us the suitable $w$ in order to get the transversality condition. The result is also known as Quantitative Transversality theorem. We shall however show that we can make specific choice for $w_j$'s in our context which will be needed in the proof of \ref{Main}. 

\begin{theorem}(\cite{Donaldson})
\label{QTT}
For $\sigma >0$, let $\mathcal{H}_{\sigma}$ denote the set of functions $f$ on $I^{2n}_{k^{-1/2},1}$ such that 
\begin{enumerate}
\item $|f|_{C^0(I^{2n}_{k^{-1/2},1})}\leq 1$\\
\item $|\bar{\partial} f|_{C^0(I^{2n}_{k^{-1/2},1})}\leq \sigma$\\
\end{enumerate}
Then there is an integer $p$, depending only on the dimension $n$, such that for any $\delta$ with $0<\delta<1/2$, if $\sigma\leq Q_p(\delta)\delta$, then for any $f\in \mathcal{H}_{\sigma}$ there is a $w\in \mathbb{C}$ with $|w|\leq \delta$ such that $f$ is $Q(\delta)\delta$-transverse to $w$ over $I^{2n}_{k^{-1/2},1}$. Moreover $w$ can be taken to be $-(a+ia),\ a>0$ and $-(a+ib),\ b=-a>0$. The choice between these two options will be clarified later.
\end{theorem}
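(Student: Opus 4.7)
The plan is to adopt Donaldson's proof of Theorem 20 in \cite{Donaldson} wholesale and to inject a ray-selection refinement at the final step. The unconstrained existence of a good $w$ in the disk $\{|w|\leq\delta\}$ is exactly Donaldson's theorem; what requires additional work is that $w$ may be chosen on the union of the two rays
\[
\Gamma_{+}=\{-a(1+i):a>0\},\qquad \Gamma_{-}=\{-a(1-i):a>0\}.
\]
As the statement already indicates, the sign (i.e.\ whether $w$ lies on $\Gamma_{+}$ or on $\Gamma_{-}$) cannot be fixed in advance; it will be imposed a posteriori by whichever ray actually carries a good value, which is all the later application to Theorem \ref{Main} needs.

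Structurally I would proceed as follows. Using $|\bar{\partial}f|_{C^0}\leq\sigma\leq Q_p(\delta)\delta$, approximate $f$ on $I^{2n}_{k^{-1/2},1}$ by a holomorphic polynomial $P$ of degree $d=d(n,p)$ with $|f-P|_{C^1}\leq\tfrac{1}{2}Q_p(\delta)\delta$; by the $C^1$-stability of $\eta$-transversality noted after Definition \ref{transversality}, it then suffices to find $w$ for which $P$ is $2Q_p(\delta)\delta$-transverse to $w$. Donaldson's main technical lemma bounds the 2-dimensional Lebesgue measure of the bad set
\[
C_{\eta}=\bigl\{w\in\mathbb{C}:\exists\, z\in I^{2n}_{k^{-1/2},1},\ |P(z)-w|<\eta\ \text{and}\ |\partial P(z)|<\eta\bigr\}
\]
by $C\eta^{\alpha}$ for some $\alpha=\alpha(n,d)>0$.

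The new input is a 1-dimensional analogue of that estimate on each of the two rays. The most direct route goes through the fact that $P$ is holomorphic of degree $d$, so the set of honest critical values $V=P(\mathrm{Crit}\,P)$ is a finite subset of $\mathbb{C}$ of cardinality bounded by a function of $n$ and $d$ (by holomorphic Sard combined with the algebraicity of $P$); a uniform Lojasiewicz inequality for $|\partial P|$ together with the Lipschitz bound on $P$ then places $C_{\eta}$ inside a small thickening of $V$, whose intersection with the 1-dimensional set $\Gamma_{+}\cup\Gamma_{-}$ is a finite union of intervals of total length $O(\eta^{\beta})$ for some $\beta=\beta(n,d)>0$. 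Setting $\eta=Q_p(\delta)\delta$ and choosing $p$ so large that the resulting length bound is strictly smaller than the total arc length $2\delta$ of $(\Gamma_{+}\cup\Gamma_{-})\cap\{|w|\leq\delta\}$, a positive 1-measure of good values on the rays remains, and any such value serves as the required $w$.

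The main obstacle I expect is precisely this 1-dimensional slice bound. A naive use of the uniform Lojasiewicz inequality yields only $\beta=1/N$, and the competition between the logarithmic decay of $Q_p(\delta)$ and the polynomial growth of $\delta^{1/N-1}$ is not in our favor; so the argument really has to be carried out by rerunning Donaldson's own inductive construction with the outermost 2-dimensional averaging over the disk replaced by 1-dimensional averaging along rays. Each inductive step in Donaldson's proof only loses one logarithmic factor, so the same functional form $Q_p$ with $p$ enlarged by a controlled amount should still work; verifying this step carefully, in particular rechecking the base case and the measure-theoretic bookkeeping along the two fixed directions $-(1+i)$ and $-(1-i)$, is where the real work lies.
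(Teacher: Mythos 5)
Your proposal is correct in outline but handles the ray-selection step by a genuinely different route than the paper. The paper does not prove any one-dimensional measure bound along the rays: after running Donaldson's covering argument to get a good $w$ in the half disc $\{|w|\leq\rho,\ Im(w)\leq 0\}$, it switches to a completely non-measure-theoretic argument that is special to the functions actually arising in the construction. Namely, for $f$ equal to the ratio of the neighboring localized sections (with coefficients of the form $c+ic$ or $-c+ic$), the paper computes that $Im(f)\geq 0$ on the relevant cube for large $k$; hence for $v_i$ on either ray one has $Im(f-v_i)\geq a>0$, so $|f-v_i|$ is bounded below and the $\tilde{\epsilon}$-transversality of $f-v_i$ to $0$ holds vacuously. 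This buys a very short argument but only establishes the ray refinement for the specific $f$'s of the construction, not for arbitrary $f\in\mathcal{H}_{\sigma}$ as the statement literally asserts. Your approach, by contrast, would prove the literal statement, and it is easier than you fear: you do not need a Łojasiewicz inequality or a new inductive scheme, because Donaldson's argument (via Proposition \ref{decomposition}) already covers the bad set of values by $A\leq Cd^{\nu}$ discs of radius $C\epsilon d^{\nu}$ with $d\leq C\log\epsilon^{-1}$, so its intersection with a fixed ray is a union of at most $A$ intervals of total length at most $C\epsilon(\log\epsilon^{-1})^{2\nu}$; comparing this with the ray length $\rho$ instead of comparing areas only replaces the exponent $\tfrac{3\nu}{2}$ by $2\nu$ in the constraint on $\rho$, which is absorbed by enlarging $p$ in the final reparametrization $\eta=h(\epsilon)$ exactly as in the two-dimensional case. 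The one point you should make explicit is that the argument must be run on each ray separately (which it can be, since the covering bound is independent of the ray), because the later application requires a good $w$ on a prescribed one of the two rays, not merely on their union.
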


We shall prove \ref{QTT} at the end of this section. For now let us continue with our construction.\\

Let us assume that we have reached the $\alpha-1$-th stage of our construction and we want to reach the $\alpha$-th stage. So we have chosen the $w^{\alpha-1}$ so that $s_{\alpha-1}$ is $\eta_{\alpha}$-transverse over $W_{\alpha-1}\subset W$ for some positive $\eta_{\alpha-1}$ with $0<\eta_{\alpha-1}<\rho$. We will now choose $w_i^{\alpha},\ i\in \Lambda_{\alpha}$. We need $|w_i^{\alpha}-w_i^{\alpha-1}|\leq \delta_{\alpha}$. We shall set $\delta_{\alpha}>0$ shortly. With this choice of $w_i^{\alpha}$, \ref{f-est-2} gives us that $s^{\alpha}$ is $\eta_{\alpha-1}-\delta_{\alpha}$-transverse over $W_{\alpha-1}$. So if we set $\delta_{\alpha}=\frac{1}{2}\eta_{\alpha-1}$ we can conclude that $s^{\alpha}$ is still $\frac{1}{2}\eta_{\alpha-1}$-transverse on $W_{\alpha-1}$. Now we shall consider the cubes $I^{2n}_i,\ i\in \Lambda_{\alpha}$. Here the section $s^{\alpha-1}$ is represented by the function $f_i=f_i^{\alpha-1}=\frac{s^{\alpha-1}}{\sigma_i}$. By \ref{f-est-1} $f_i$ is bounded by a fixed constant $C$ over $I^{2n}_{k^{-1/2},1}$ and $C^{-1}f_i\in \mathcal{H}_{\sigma}$ for $\sigma=C^{-1}(k^{-1/2}+k^{-1})$. Now we apply \ref{QTT} $C^{-1}f_i$ for suitably small $\rho$ and $\delta=C^{-1}\delta_{\alpha}$ as long as \[k^{-1/2}+k^{-1}\leq C\delta_{\alpha}Q_p(\delta_{\alpha})\] And thus we get $v_i,\ i\in \Lambda_{\alpha}$ with $|v_i|\leq \delta_{\alpha}$ such that $f_i$ is $Q_p(\delta_{\alpha})\delta_{\alpha}$-transverse to $v_i$ and equivalently $f_i-v_i$ is $Q_p(\delta_{\alpha})\delta_{\alpha}$-transverse to $0$. Observe that $f_i-v_i$ represents $s_{w'}$ where $w'_j=w_j^{\alpha-1},\ if\ j\neq i$ and $w'_i=w_i^{\alpha-1}-v_i$. In view of \ref{f-est-2} for large $D$ we can make all the changes to the $w_i^{\alpha-1},\ i\in \Lambda_{\alpha}$ simultaneously as explained above. This concludes the proof of \ref{fin-est}.\\

Now we shall see the proof of \ref{QTT} which is same as the proof of Theorem-20 of \cite{Donaldson}. Although we shall outline the entire proof for completeness but all we need to show that we can make the choice for $w_i$'s in our context. The choice is different from \cite{Donaldson}.\\

Let $P:\mathbb{R}^n\to \mathbb{R}$ be a polynomial function of degree $d$. Let $S\subset \mathbb{R}^n$ be the subset \[S=\{x\in \mathbb{R}^n:|x|\leq 1,\ P(x)\leq 1\}\] Similarly set $S(\theta)=\{x:|x|\leq 1,\ P(x)\leq 1+\theta\}$. 

\begin{proposition}(\cite{Donaldson})
\label{decomposition}
There are constants $C,\ \nu$, depending only on the dimension, such that for any polynomial $P$ there are arbitrarily small positive $\theta$ so that $S$ can be decomposed into pieces \[S=S_1\cup...\cup S_A\] where $A\leq Cd^{\nu}$, in such a way that any pair of points in the same piece $S_r$ can be joined by a path in $S(\theta)$ of length less than $Cd^{\nu}$.
\end{proposition}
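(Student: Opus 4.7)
The plan is to proceed by induction on the dimension $n$, in the spirit of the original argument in \cite{Donaldson}.

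For the base case $n=1$, a real polynomial of degree $d$ has at most $d$ real roots, so $\{x \in [-1,1] : P(x) \le 1\}$ is a disjoint union of at most $d+1$ closed subintervals. Declare each subinterval to be a piece $S_r$; any two points in such a piece lie at distance $\le 2$ in $S \subset S(\theta)$, and the count satisfies $A \le d+1$, so the claim holds with $\nu=1$ for every $\theta > 0$.

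For the inductive step, slice $S$ by hyperplanes normal to a coordinate, say $x_1 = t$. The topology of the slice $S_t := S \cap \{x_1 = t\}$ changes only when $t$ is a critical value of $x_1$ restricted to the algebraic pieces $\{P=1\}$, $\{|x|^2=1\}$, and their intersection. By a Bezout-type bound applied to the polynomial systems cutting out these critical loci, the number of such critical values is at most $C_n d^n$. Choose a representative $t_j$ in each of the open intervals these critical values carve out of $[-1,1]$ and apply the inductive hypothesis to the polynomial $P_{t_j}(x_2,\ldots,x_n) := P(t_j, x_2,\ldots,x_n)$: the slice $S_{t_j}$ decomposes into at most $C_{n-1}d^{\nu(n-1)}$ pieces, each of intrinsic diameter $\le C_{n-1}d^{\nu(n-1)}$ inside the thickening $S(\theta/2) \cap \{x_1 = t_j\}$. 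Since the topology of $S_t$ is stable on each open interval, extend each piece to a tube across its interval; the total count is at most $(C_n d^n)(C_{n-1}d^{\nu(n-1)}) = C_{n+1} d^{\nu(n)}$ with $\nu(n) := n + \nu(n-1)$, a polynomial bound in $d$.

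For the path-length estimate inside $S(\theta)$, note that two points in the same tube are joined by first moving within a slice (length $\le C_{n-1}d^{\nu(n-1)}$ by induction) and then along the $x_1$-axis (length $\le 2$). Choose $\theta$ strictly smaller than the smallest gap between the finitely many critical values produced at every level of the induction; this keeps the tubes from merging in $S(\theta)$ while letting each inductive path stay inside the thickening. Because only finitely many critical values appear, $\theta$ may be taken arbitrarily small, as claimed.

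The main obstacle will be controlling how the inductive slice decompositions match across neighboring critical values of $x_1$, because as $t$ crosses such a value the cells of $S_t$ may split or merge and the diameter of a tube could in principle blow up. The key is that the number of critical values admits a Bezout bound uniform in $P$ of the form $C_n d^n$, and the freedom to take $\theta$ below the smallest gap between these critical values lets each tube thread through the critical hyperplane as essentially a product; the intrinsic diameter then grows by only a bounded factor at each inductive level, which closes the recursion $\nu(n) = n + \nu(n-1)$ and delivers the desired polynomial bound.
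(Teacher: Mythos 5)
The paper does not actually prove this proposition; it is quoted verbatim from \cite{Donaldson}, so there is no internal argument to compare yours against. Judged on its own terms, your slicing induction has a genuine gap at exactly the point you flag as ``the main obstacle,'' and the fix you offer does not work. The heart of the proposition is the uniform bound $Cd^{\nu}$ on the \emph{length} of connecting paths inside $S(\theta)$, and your construction never produces it. Constancy of the topological type of the slice $S_t$ over an interval of regular values gives (via a local triviality argument) an abstract homeomorphism between fibers, but no metric control whatsoever: it does not let you transport the decomposition obtained from the inductive hypothesis at the single slice $t_j$ to the other slices, and it does not bound the length of a path joining a point of the tube at $x_1=t$ to the distinguished slice at $x_1=t_j$. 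Your proposed path ``first within a slice, then along the $x_1$-axis'' fails even pointwise: a segment parallel to the $x_1$-axis starting at a point with $P\le 1$ has no reason to satisfy $P\le 1+\theta$ along its length, so it generally exits $S(\theta)$ immediately.

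The appeal to $\theta$ is also misdirected. The thickening $S(\theta)=\{|x|\le 1,\ P\le 1+\theta\}$ enlarges $S$ in the $P$-direction only; choosing $\theta$ smaller than the gaps between the critical values of $x_1$ has no logical bearing on whether your tubes acquire a product structure, and in any case making $\theta$ \emph{smaller} only makes it harder, not easier, for paths to exist in $S(\theta)$. (The quantity that must be uniform in $P$ is the pair $(A,\,\text{length})\le Cd^{\nu}$; $\theta$ itself may depend on $P$, so that part is admissible, but it is doing no work here.) Donaldson's actual argument is of a different nature: it rests on Milnor--Thom type bounds for the Betti numbers of real algebraic sets cut out by polynomials of degree $d$, together with a careful construction of connecting paths (using gradient flows and polar-type curves of auxiliary functions) whose total length is bounded by a power of $d$; that length estimate is the genuinely hard step, and it is precisely the step your argument leaves unproved. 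To repair your approach you would need, at minimum, a quantitative version of the local triviality over each interval of regular values --- e.g.\ a semialgebraic trivialization with controlled Lipschitz constants --- and such control is not available with a bound depending only on $n$ and $d$ without substantial additional work.
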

 The proof decomposes into two parts. First we prove the \ref{QTT} for holomorphic $f$ and then use some estimates to prove the result for approximately holomorphic $f$. 

  \begin{lemma}(\cite{Donaldson})
  \label{poly-Approx}
  Let $f:I^{2n}_{k^{-1/2},1}\to \mathbb{C}$ be a holomorphic map with $|f(z)|\leq 1$. Then for any $\epsilon$ satisfying $0<\epsilon<1/2$ there is a complex polynomial function $g$ on $\mathbb{C}^n$ of degree less than $C\log \epsilon^{-1}$ such that $|f(z)-g(z)|,\ |\partial f-\partial g|\leq \epsilon$ in the interior region. 
  \end{lemma}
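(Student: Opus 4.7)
The plan is to construct $g$ as a truncation of the Taylor series of $f$ at the origin. Since $f$ is holomorphic and bounded by $1$ on the cube $I^{2n}_{k^{-1/2},1}$, it is in particular holomorphic on the polydisc $\Delta_R$ of polyradius $R=k^{-1/2}$ centered at the origin (this being the inradius of the cube in each complex coordinate). The multivariable Cauchy integral formula then gives the expansion $f(z)=\sum_\alpha a_\alpha z^\alpha$ on $\Delta_R$, with coefficient bounds $|a_\alpha|\le R^{-|\alpha|}$ obtained by integrating over the distinguished boundary.

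I would fix a slightly smaller polydisc $\Delta_\rho$ with $\rho=\lambda R$ for some fixed $\lambda<1$ (which serves as the ``interior region'') and set $g(z)=\sum_{|\alpha|\le N} a_\alpha z^\alpha$ for an integer $N$ to be chosen. On $\Delta_\rho$ one has $|a_\alpha z^\alpha|\le \lambda^{|\alpha|}$, and since the number of multi-indices of total degree exactly $d$ in $n$ variables is at most $C_n d^{n-1}$, summing the tail gives
\[
|f(z)-g(z)|\;\le\;\sum_{d>N} C_n d^{n-1}\lambda^{d}\;\le\; C'_n\,\lambda^N.
\]
Setting $N=\lceil C\log\epsilon^{-1}\rceil$ with $C=C(n,\lambda)$ sufficiently large makes this at most $\epsilon$, which is the degree bound in the conclusion.

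For the derivative estimate, I would either differentiate the series termwise, so that $\partial f-\partial g=\sum_{|\alpha|>N}a_\alpha\,\partial(z^\alpha)$ is controlled pointwise on $\Delta_\rho$ by a similar geometric sum (after accounting for the extra factor $|\alpha|$ and a factor of $R^{-1}$), or alternatively apply Cauchy's estimate to $\partial_j f$ on a slightly larger polydisc and repeat the truncation argument there. Either way, enlarging $C$ absorbs the extra polynomial factor in $N$ and the $R^{-1}$, so the same logarithmic choice of $N$ forces $|\partial f-\partial g|\le\epsilon$ simultaneously with the $C^0$ estimate.

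The main technical subtlety I expect is that the ``interior region'' produced by this construction is only a polydisc of radius comparable to $k^{-1/2}$, and not the full anisotropic cube $I^{2n}_{k^{-1/2},1}$. This is precisely what the quantitative transversality argument of Theorem~\ref{QTT} needs, since that argument is carried out cube by cube and only requires polynomial approximation on a polydisc inside each cube; but specifying which interior polydisc is meant, and verifying that the $k$-dependence in the derivative bound matches the scaling used in the application, is the bookkeeping step that requires the most care.
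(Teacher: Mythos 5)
Your route---truncating the Taylor expansion at the centre and controlling the tail with Cauchy estimates on the distinguished boundary---is exactly the argument behind the lemma in \cite{Donaldson} that is being quoted here; the paper itself gives no independent proof (it only cites Donaldson), so on an isotropic unit-scale chart your write-up would be essentially the intended one. The difficulty is entirely in the adaptation to the anisotropic cube $I^{2n}_{k^{-1/2},1}$, and the two places where your sketch breaks are precisely the ones you flag and then dismiss.

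First, the largest polydisc centred at the origin on which $f$ is guaranteed holomorphic has polyradius $k^{-1/2}$, so the ``interior region'' your construction produces occupies a vanishing fraction of the cube in the $y$-directions as $k\to\infty$. That is not enough for the application: in the proof of \ref{QTT} the polynomial $g$ must approximate $f$, and the semi-algebraic decomposition of $\{|\partial g|\le 2\epsilon\}$ must be performed, over essentially the whole smaller cube $I^{2n}_{k^{-1/2},1}$, because it is the translates of these full cubes (not of tiny polydiscs inside them) that cover $W$ and over which $\eta$-transversality has to be certified. Covering the cube by $O(k^{n/2})$ polydiscs of radius $\sim k^{-1/2}$ does not repair this, since you would get a different approximating polynomial on each polydisc rather than a single $g$ of degree $C\log\epsilon^{-1}$ on the whole cube. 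Second, the claim that ``enlarging $C$ absorbs \dots the $R^{-1}$'' in the derivative estimate is false as stated: with $R=k^{-1/2}$ the Cauchy bound $|a_\alpha|\le R^{-|\alpha|}$ gives, on the polydisc of polyradius $\lambda R$,
\[
|a_\alpha\,\partial_j(z^\alpha)|\;\le\;\alpha_j\,\lambda^{|\alpha|-1}\,R^{-1}\;=\;\alpha_j\,\lambda^{|\alpha|-1}\,k^{1/2},
\]
so the factor $k^{1/2}$ multiplies the geometric tail rather than entering the logarithm, and forcing the tail below $\epsilon$ requires $N\gtrsim \log\epsilon^{-1}+\tfrac12\log k$; no $k$-independent enlargement of $C$ in the degree bound $C\log\epsilon^{-1}$ removes it. To make the lemma usable here one must either state and prove it on an isotropic chart as Donaldson does, or rescale the coordinates and norms so that the cube becomes isotropic and then recheck that the resulting $k$-dependence is compatible with the estimates of Theorem \ref{sec-est} and the constraint $k^{-1/2}+k^{-1}\le C\delta_\alpha Q_p(\delta_\alpha)$ used in the transversality induction.
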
 
  Now we shall prove \ref{QTT} for holomorphic $f$. So let us assume that $f$ is as in \ref{poly-Approx} and we use \ref{poly-Approx} to approximate $f$ by a polynomial $g$ of degree $d\leq C\log \epsilon^{-1}$. Define \[S^f=\{z\in I^{2n}_{k^{-1/2},1} \cup \mathbb{C}^n:|\partial f|\leq \epsilon\}\] \[S^g=\{z\in I^{2n}_{k^{-1/2},1} \cup \mathbb{C}^n:|\partial g|\leq 2\epsilon\}\] Obviously $S^f \subset S^g$ and therefore $f(S^f)\subset f(S^g)\subset N_{\epsilon}(g(S^g))$ where $N_{\epsilon}(g(S^g))$ is the $\epsilon$-neighborhood of $g(S^{g})$. Observe that $S^g$ is a semi-algebraic set of the kind in \ref{decomposition}, where we have taken $P=(2\epsilon)^{-2}|\partial g|^2$ whose degree is $2(d-1)$. So \ref{decomposition} gives a decomposition of $S^g$ into $A$ pieces. Let $z_1,z_2$ are in the same piece of $S^g$ then integrating the derivative of $g$ over a path of length less than $Cd^{\nu}$ in $P(\theta)$, for suitable $\theta$, joining $z_1$ to $z_2$, we have $|g(z_1)-g(z_2)|\leq 4\epsilon Cd^{\nu}$. Therefore $g(S^g)$ is contained in the union of $A$ discs in $\mathbb{C}$ each of radius $4\epsilon Cd^{\nu}$. So $f(S^g)$ and hence $f(S^f)$ is contained in the union of slightly larger discs of radius $\epsilon(2Cd^{\nu}+1)$. \\
  
  The $\epsilon$-transversality between $f$ and $w$ is that $w$ lies outside the $\epsilon$-neighborhood of $f(S^f)$ and the neighborhood is contained in the union of $A$ discs of radius $\epsilon(4Cd^{\nu}+2)$. The total area in $\mathbb{C}$ of these discs is at most $A\pi (4Cd^{\nu}+2)^2\epsilon^2$. If we choose $\rho$ such that the area of the half disc \[\Omega=\{w\in \mathbb{C}: |w|\leq \rho,\ Im(w)\leq 0\}\] is biggere than the total area covered by these discs , there is a $w\in \Omega$ not contained in the $\epsilon$-neighborhood of $f(S^f)$. The condition on $\rho$ is \[\frac{1}{2}\pi \rho^2>A\pi(4Cd^{\nu}+2)^2\epsilon^2\] which is $\rho>(2A)^{1/2}(4Cd^{\nu}+2)\epsilon$. Observe that $A$ are bounded by powers of degree of $P$, hence of the degree $d$ of $g$ which is bounded by a power of $\log(\epsilon)^{-1}$. \\
  
  So over all, for arbitrarily small $\epsilon$ there exists a $w\in \mathbb{C}$ such that $f$ is $\epsilon$-transverse to $w$ and $|w|\leq C\epsilon \log(\epsilon^{-1})^p$ for some $p$. Now one needs to re-arrange the parameters. For fixed $C,p$, the function $h$ given by $h(\epsilon)=C\epsilon\log(\epsilon^{-1})^p$ is increasing for for small $\epsilon$ and tends to $0$ as $\epsilon$ approaches $0$. If $\eta=h(\epsilon)$ then \[\eta \log(\eta^{-1})^{-p}=C\epsilon (\frac{\log\epsilon^{-1}}{\log \epsilon^{-1}-p\log \log \epsilon^{-1}-\log C})^p <2C\epsilon,\ (if\ \epsilon\ small)\] By inverting $h$, we conclude that for small $\eta$ there is a $w$ with $|w|\leq \eta$ such that $f$ is $\frac{1}{2C}\eta (\log \eta^{-1})^p$-transverse to $w$, and by increasing $p$ and assuming $\eta$ to be small, we can replace the factor $\frac{1}{2C}$ by $1$.\\
  
  In the above we considered holomorphic $f$. Now we shall consider approximately holomorphic $f$. Although we have not shown that we can make the specific choice for $w$ in our context, but we shall show this at the end of the proof. \\

Using H\"{o}rmander's weighted $L^2$ space method \cite{Hormander} one gets the following

\begin{lemma}(\cite{Donaldson}) 
\label{Hor}
For each $r<1$ there is a constant $K=K(r)$ such that if $f$ is any smooth complex valued function on $I^{2n}_{k^{-1/2},1}$, then there is a holomorphic function $\tilde{f}$ on $rI^{2n}_{k^{-1/2},1}$ such that \[|f-\tilde{f}|_{C^1(rI^{2n}_{k^{-1/2},1})}\leq K(|\bar{\partial}f|_{C^0(I^{2n}_{k^{-1/2},1})}+|\triangledown \bar{\partial}f|_{C^0(I^{2n}_{k^{-1/2},1})})\]
\end{lemma}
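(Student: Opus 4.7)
The plan is to convert the problem into solving an inhomogeneous $\bar{\partial}$-equation with estimates, and then subtract the solution from $f$. Explicitly, set $g=\bar{\partial}f$ and seek a smooth function $u$ on (a slightly shrunken) $I^{2n}_{k^{-1/2},1}$ solving $\bar{\partial}u=g$, with $C^1$ control on $u$ on $rI^{2n}_{k^{-1/2},1}$ by $|g|_{C^0}+|\triangledown g|_{C^0}$. Then $\tilde{f}:=f-u$ is holomorphic on $rI^{2n}_{k^{-1/2},1}$ and satisfies the desired inequality.

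First I would apply H\"{o}rmander's weighted $L^2$ existence theorem (\cite{Hormander}) for the $\bar{\partial}$-operator on the cube $I^{2n}_{k^{-1/2},1}$. Since the cube is convex, it is pseudoconvex; moreover $g=\bar{\partial}f$ is automatically $\bar{\partial}$-closed because $\bar{\partial}^2=0$. Choosing a bounded plurisubharmonic weight (for instance $\varphi(z)=|z|^2$ suffices, since the cube has bounded diameter), H\"{o}rmander's theorem gives a smooth $u$ on the cube solving $\bar{\partial}u=g$ with
\[\|u\|_{L^2(I^{2n}_{k^{-1/2},1})}\leq C\,\|g\|_{L^2(I^{2n}_{k^{-1/2},1})}\leq C'\,|\bar{\partial}f|_{C^0(I^{2n}_{k^{-1/2},1})},\]
the second inequality using the bounded volume of the cube.

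Next I would upgrade this $L^2$ bound on $u$ to a $C^1$ bound on the slightly smaller cube $rI^{2n}_{k^{-1/2},1}$. The key observation is that the equation $\bar{\partial}u=g$ together with the Bochner--Kodaira-type identity implies $\Delta u$ is controlled by (first derivatives of) $g$. Equivalently, the $\bar{\partial}$-complex is overdetermined elliptic, and interior Schauder / Sobolev estimates on any compactly contained subdomain yield
\[|u|_{C^1(rI^{2n}_{k^{-1/2},1})}\leq K_1(r)\bigl(\|u\|_{L^2(I^{2n}_{k^{-1/2},1})}+|g|_{C^0(I^{2n}_{k^{-1/2},1})}+|\triangledown g|_{C^0(I^{2n}_{k^{-1/2},1})}\bigr).\]
Combining this with the H\"{o}rmander estimate and setting $\tilde{f}=f-u$ gives the stated inequality with $K=K(r)$.

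The main technical point, and the one that requires the most care, is verifying that the constant $K(r)$ can be taken independent of $k$. Because the cube is thin in the $x$-directions of width $k^{-1/2}$, a careless application of standard interior regularity would introduce factors of the inverse distance to the boundary, which degenerate as $k\to\infty$. The fix is to rescale the $x$-coordinates by $k^{1/2}$, apply the regularity estimates on the unit cube (where the constants are absolute), and then rescale back; since $\bar{\partial}$ and the $C^0$/$C^1$ norms behave in a controlled way under this anisotropic dilation (and the worst factor that appears is a power of $k^{1/2}$ which can be absorbed into the $|\triangledown\bar{\partial}f|$ term on the right-hand side), one obtains a uniform constant. This is the essential obstacle to writing down the proof cleanly.
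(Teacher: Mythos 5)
The paper does not actually prove this lemma: it attributes it to Donaldson with the one-line remark that it follows from H\"{o}rmander's weighted $L^2$ method. Your outline --- solve $\bar{\partial}u=\bar{\partial}f$ on the pseudoconvex cube with an $L^2$ bound, upgrade to a $C^1$ bound on the smaller cube by interior elliptic regularity, and set $\tilde{f}=f-u$ --- is exactly the intended and standard argument, and up to the final step it is fine.

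The gap is in your treatment of the $k$-uniformity of $K(r)$, which you correctly identify as the only delicate point. The cube $I^{2n}_{k^{-1/2},1}$ is thin in the $x$-directions, so the distance from $rI^{2n}_{k^{-1/2},1}$ to the boundary is of order $(1-r)k^{-1/2}$ and a naive interior estimate loses powers of $k^{1/2}$. Your proposed fix --- dilate only the $x$-coordinates by $k^{1/2}$ and apply estimates on the unit cube --- fails, because $\bar{\partial}=\frac{1}{2}(\partial_x+i\partial_y)$ is not preserved by real-linear maps that are not complex-linear: after the anisotropic dilation the operator becomes $\frac{1}{2}(k^{1/2}\partial_{x'}+i\partial_{y})$, whose ellipticity constants degenerate as $k\to\infty$, so the ``absolute constants on the unit cube'' you invoke do not exist; you have merely traded a thin domain for a degenerate operator. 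Nor can a stray factor of $k^{1/2}$ be ``absorbed into the $|\triangledown\bar{\partial}f|$ term'': the lemma asserts a fixed $K(r)$, and in the application ($f\in\mathcal{H}_{\sigma}$ with $\sigma\sim k^{-1/2}$, where one needs $|f-\tilde{f}|_{C^1}\leq C\sigma$) a loss of $k^{1/2}$ would destroy precisely the smallness being preserved. Closing this would require either an argument genuinely adapted to thin slabs (e.g.\ kernel estimates for an explicit solution operator) or restating the lemma on domains of fixed shape as in Donaldson's original; note that the difficulty is created by the paper's $k$-dependent anisotropic cubes and is not addressed in the paper either.
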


Now if $f\in \mathcal{H}_{\sigma}$, we approximate $f$ by a holomorphic function $\tilde{f}$ on a slightly smaller region with $|f-\tilde{f}|_{C^1}\leq C\sigma$. Then use the above arguments for $\tilde{f}$.\\

Now we show how we can make the specific choice for $w$ in the half disc \[\{w \in \mathbb{C}:|w|\leq \rho,\ Im(w)\leq 0\}\] We must show that $f-v_i$ is $\tilde{\epsilon}$-transverse to $0$ for some small $\tilde{\epsilon}>0$. Any coordinate chart intersect only finitely many adjacent charts and the number depends on the dimension $n$. We only consider one such intersection. Let $\phi=(\phi_1,...,\phi_n)$ be the transition function. So we take \[f=\frac{\beta'[(ce^{-(Re\Sigma \phi_j^2)/4}-d)+i(de^{-(Re\Sigma \phi_j^2)/4}+c)]}{\beta(e^{-(Re\Sigma z_j^2)/4}+i)}\] where either $c=d>0$ or $d=-c>0$. Here $\beta$ corresponds to the chart under consideration and $\beta'$ corresponds to the adjacent chart. The sign of $c,d$ changes as we are adding $-v_i\sigma_i$ because $w_i'=w_i^{\alpha}-v_i$ and at the beginning we set $w_0=(w_1,0,..,0)$ where $w_1=c+ic,\ c>0$. Simplifying we get 
\[
\begin{array}{rcl}
f &=&\frac{\beta'}{\beta(e^{-(Re\Sigma z_j^2)/2}+1)} [\{c(1+e^{-(Re\Sigma z_j^2)/4}e^{-(Re\Sigma \phi_j^2)/4})+d(e^{-(Re\Sigma \phi_j^2)/4}-e^{-(Re\Sigma z_j^2)/4})\}\\
 & & +i\{c(e^{-(Re\Sigma z_j^2)/4}-e^{-(Re\Sigma \phi_j^2)/4})+d(1+e^{-(Re\Sigma z_j^2)/4}e^{-(Re\Sigma \phi_j^2)/4})\}]\\ 
\end{array}
\]

Now observe that for large $k$, i.e, for range of $|x_j|$ being very small $(Assuming\ z_j=x_j+iy_j)$, the imaginary part of $f$ for both $c=d>0$ and $d=-c>0$ is non-negative. To see this consider the numerator of the imaginary part for $c=d>0$ 
\[
c[(e^{-(Re\Sigma z_j^2)/4}-e^{-(Re\Sigma \phi_j^2)/4})+(1+e^{-(Re\Sigma z_j^2)/4}e^{-(Re\Sigma \phi_j^2)/4})]\]
\[
=c[1+e^{-(Re\Sigma z_j^2)/4}(1+e^{-(Re\Sigma \phi_j^2)/4})-e^{-(Re\Sigma \phi^2)/4}]\]
First observe that if $e^{-(Re\Sigma z_j^2)/4}>1$ then the imaginary part is non-negative.\\

If $e^{-(Re\Sigma z_j^2)/4}<1$ we observe that as $k$ becomes large $e^{-(Re\Sigma z_j^2)/4}$ becomes very close to $1$. So for large $k$ the imaginary part is non-negative.\\

We get the same result for the case $d=-c>0$ by making $k$ large enough.\\

 So the imaginary part of $f-v_i$ is bigger than $a$ if we take $v_i=-(a+ia),\ a>0$ and is bigger than $b$ if we take $v_i=-(a+ib),\ b=-a>0$ which concludes the argument. For more intersection we may need to take larger $k$.\\
 
 Now we make an observation which will be used in the next section. 
 
 \begin{remark}
 \label{translation}
 In the construction above we can make translation say $y_j+constant$ in the coordinate charts (which are cubes). To see this notice that this will only need a readjustment in the constant $C$ in \ref{sec-est} which were adjusted multiple times during the construction as mentioned above.
 \end{remark}

\section{Main Theorem} In this section we prove \ref{Main}. As in the previous section we consider $s$ for large values of $k$. So $\mathcal{Z}(s)$ is a symplectic submanifold and hence $s^{-1}(z)$ for $z$ close to $0$ is also symplectic. This forms a tubular neighborhood of $\mathcal{Z}(s)$. \\

Now we consider the $p_1\in L$ the center of the first chart in the construction of $s$ as in the previous section. Obviously it is a singularity for $s$ and now we'll figure out the stable manifold corresponding to the vector field $X_{Im \log s}$, the Hamiltonian vector field corresponding to $Im \log s=Arg(s)$ (similar idea has been used in \cite{Pardon} as we have mentioned in the introduction). But before we do so let us show that it is a hyperbolic zero for $X_{Im \log s}$.

\begin{proposition}
\label{Hyperbolic}
$p_1\in L\subset W$ is a hyperbolic zero for $X_{Im \log s}=X_{\theta},\ where\ \theta=Arg(s)$ and the tangent space to the stable manifold is given by $\{x_j=y_j:j=1,...,n\}$ at $p_1$. Moreover on the set $\beta_1=constant\neq 0$ the stable manifold is given by $\{x_j=y_j:j=1,...,n\}$. 
\end{proposition}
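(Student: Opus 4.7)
The plan is to reduce everything to a local computation in the Darboux chart around $p_1$. By taking $D$ large in Theorem~\ref{index}, on the central cube $\{\beta_1 = s_k\}$ the cut-offs $\beta_j$ for all other $j$ in the colour class of $1$ vanish, so (modulo cross-terms from overlapping charts of other colour classes, which must be handled as at the end of Section~4) the section takes the model form
\[
s = w_1 s_k\bigl(e^{-R/4}+i\bigr), \qquad R = \operatorname{Re}\sum_{j=1}^n z_j^2 = \sum_{j=1}^n(x_j^2-y_j^2), \qquad w_1 = c+ic.
\]
Controlling those cross-terms so that they do not upset the picture below is the principal technical obstacle; one exploits the Gaussian factor $e^{-R_j/4}$ in adjacent $\sigma_j$'s together with the chosen separation $D$.

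Accepting the model, $\theta = \operatorname{Im}\log s$ equals $\arg w_1 + \arg(e^{-R/4}+i) = \pi/4 + \arctan(e^{R/4})$ modulo constants. Setting $f(R)=\arctan(e^{R/4})$ and using $dR = 2\sum_j(x_j\,dx_j - y_j\,dy_j)$, I solve $\iota_{X_\theta}\omega_0 = d\theta$ with $\omega_0 = \sum_j dx_j\wedge dy_j$ to get
\[
X_\theta = -2f'(R)\sum_{j=1}^n\bigl(y_j\partial_{x_j} + x_j\partial_{y_j}\bigr).
\]
Since $dR$ vanishes at the origin, $X_\theta(p_1) = 0$. Linearising at $p_1$ with $f'(0)=1/8$ gives $DX_\theta(p_1)$ block-diagonal in the $(x_j,y_j)$-planes, each block equal to $-\tfrac14\bigl(\begin{smallmatrix}0 & 1 \\ 1 & 0\end{smallmatrix}\bigr)$ with eigenvalues $\pm 1/4$; the $-1/4$-eigenvector is $(1,1)$ and the $+1/4$-eigenvector is $(1,-1)$. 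Hence $p_1$ is a hyperbolic zero and the stable eigenspace is exactly $\{x_j = y_j : j=1,\dots,n\}$.

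For the global assertion on $\{\beta_1 = s_k\}$ I upgrade the tangent-space statement to the full stable manifold by direct invariance. From $\dot x_j = -2f'(R)y_j$ and $\dot y_j = -2f'(R)x_j$ one computes $\tfrac{d}{dt}(x_j - y_j) = 2f'(R)(x_j - y_j)$, which vanishes identically on $\{x_j = y_j\}$, so this linear subspace is flow-invariant throughout the central cube. On it $R\equiv 0$, hence $f'(R)=1/8$ and the restricted flow is $\dot x_j = -\tfrac14 x_j$, contracting exponentially to $p_1$. Therefore $\{x_j = y_j\}\cap\{\beta_1 = s_k\}$ is an $n$-dimensional invariant submanifold contained in the local stable manifold, and a dimension count forces equality.
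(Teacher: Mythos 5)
Your proof is correct and follows essentially the same route as the paper: on the set $\{\beta_1=\mathrm{const}\neq 0\}$ you reduce $s$ to the model $\beta_1 w_1(e^{-R/4}+i)$, compute the Hamiltonian vector field (your closed form $X_\theta=-2f'(R)\sum_j(y_j\partial_{x_j}+x_j\partial_{y_j})$ with $f(R)=\arctan(e^{R/4})$ agrees with the paper's $a_j,b_j$, and $f'(0)=1/8$ reproduces the eigenvalues $\pm 1/4$ with stable direction $(1,1)$), exactly as in the paper's linearization at $p_1$. The only cosmetic difference is the final step: the paper multiplies $\dot x_j$ by $x_j$ and $\dot y_j$ by $y_j$ to obtain the conserved quantity $x_j^2-y_j^2$ and sets it to zero on the stable manifold, whereas you show $\{x_j=y_j\}$ is flow-invariant and contracting and conclude by uniqueness and dimension of the stable manifold --- both arguments rest on the same structural fact $a_j\propto y_j$, $b_j\propto x_j$ with a common factor.
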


\begin{proof}
On the set $\beta_1=constant\neq 0$, $s$ is given by $s=\beta_1 (a+ib)(e^{-(Re\Sigma z_j^2)/4}+i)$. So we get 
\[
\begin{array}{rcl}
\tan \theta &=& \frac{be^{-(Re\Sigma z_j^2)/4}+a}{ae^{-(Re\Sigma z_j^2)/4}-b}=\frac{A}{B}\\
\Rightarrow \sec^2 \theta d\theta &=& (1/B)[(1/2)be^{-(Re\Sigma z_j^2)/4}\Sigma_j(y_jdy_j-x_jdx_j)]\\
 & & -(A/B^2)[(1/2)ae^{-(Re\Sigma z_j^2)/4}\Sigma_j(y_jdy_j-x_jdx_j)]\\
 \Rightarrow d\theta &=& \frac{B}{2(A^2+B^2)}[be^{-(Re\Sigma z_j^2)/4}\Sigma_j(y_jdy_j-x_jdx_j)]\\
 & & -\frac{A}{2(A^2+B^2)}[ae^{-(Re\Sigma z_j^2)/4}\Sigma_j(y_jdy_j-x_jdx_j)]
\end{array}
\]
The second line is achieved from the first by differentiating and the third line is achieved by multiplying by $\cos^2 \theta$. Note that because we are differentiating both sides of an equation we get $d_A=d$.\\

Now let $X_{\theta}=\Sigma_j(a_j\partial_{x_j}+b_j \partial_{y_j})$ and observe that the symplectic structure on the tubular neighborhood of $L\subset W$ is represented by $\omega_{st}=\Sigma_j dx_j\wedge dy_j$ on coordinate charts. Hence Hamiltonian equation $d\theta=\iota_{X_{\theta}}\omega_{st}$ gives us $d\theta=\Sigma_j (a_jdy_j-b_jdx_j)$. Solving we get \[a_j=\frac{y_je^{-(Re\Sigma z_j^2)/4}}{2(A^2+B^2)}[Bb-Aa]\] Similarly \[b_j=\frac{x_je^{-(Re\Sigma z_j^2)/4}}{2(A^2+B^2)}[Bb-Aa]\] Now we compute the partial derivatives of $a_j\ and\ b_j$ at zero. Observe that zero in this coordinates corresponds to $p_1$. So we get \[(\partial_{x_j}a_j)_{\mid 0}=0,\ (\partial_{y_j}a_j)_{\mid 0}=\frac{Bb-Aa}{2(A^2+B^2)}_{\mid 0}=-\frac{1}{4}\] Similarly \[(\partial_{y_j}b_j)_{\mid 0}=0,\ (\partial_{x_j}b_j)_{\mid 0}=\frac{Bb-Aa}{2(A^2+B^2)}_{\mid 0}=-\frac{1}{4}\] Define $F(z)=(a_1,b_1,...,a_n,b_n)$, then $DF_{\mid 0}$ has all eigenvalues equal to $\pm \frac{1}{4}$. The tangent of the stable manifold at zero corresponds to $-\frac{1}{4}$ which is given by $\{x_j=y_j:j=1,...,n\}$.\\

 Now let $(x(t),y(t))=(x_1(t),y_1(t),...,x_n(t),y_n(t))$ be the flow of $X_{\theta}$. So \[\partial_t x_j(t)=a_j(x(t),y(t))\ and\ \partial_t y_j(t)=b_j(x(t),y(t))\] Multiplying the first equation $\partial_t x_j(t)=a_j(x(t),y(t))$ by $x_j(t)$ and the second equation $\partial_t y_j(t)=b_j(x(t),y(t))$ by $y_j(t)$ and integrating we get (suppressing the $t$ all along below for convenient notations) \[\frac{1}{2}(x_j^2-y_j^2)=0\] which proves the remaining statement. Here we are discarding the $x_j=-y_j$ possibility by the description of the tangent space at zero.
\end{proof}

Now we shall figure out the stable manifold globally. But for this we need to figure out the flow of the vector field $X_{\theta}$. Let us consider $s$ on a chart adjacent to the one containing $p_1$ and intersecting $L$. Recall that on these charts $L$ is given by $\{y_j=0,\ j=1,...,n\}$. Now on this chart $\tan \theta$ is given by \[\tan \theta=\frac{\beta_1be^{-(Re\Sigma z_j^2)/4}+\beta_1a+\beta_2de^{-(Re\Sigma \phi_j^2)/4}+\beta_2c}{\beta_1ae^{-(Re\Sigma z_j^2)/4}-\beta_1b+\beta_2ce^{-(Re\Sigma \phi_j^2)/4}-\beta_2d}\] where $\beta_2$ corresponds to the chart containing $p_1$ and $\beta_1$ corresponds to the chart adjacent to it and $\phi=(\phi_1,...,\phi_n)$ is the transition function. Obviously $c=d>0$ but the choice of $a,b$ will be made clear shortly between the two options given by \ref{QTT}. Now we proceed as in the proof of \ref{Hyperbolic}.

\[
\begin{array}{rcl}
\tan \theta &=&\frac{\beta_1be^{-(Re\Sigma z_j^2)/4}+\beta_1a+\beta_2de^{-(Re\Sigma \phi_j^2)/4}+\beta_2c}{\beta_1ae^{-(Re\Sigma z_j^2)/4}-\beta_1b+\beta_2ce^{-(Re\Sigma \phi_j^2)/4}-\beta_2d}=\frac{A}{B}\\
\Rightarrow \sec^2 \theta d\theta &=& (1/B)[(1/2)b\beta_1e^{-(Re\Sigma z_j^2)/4}\Sigma_j(y_jdy_j-x_jdx_j)\\
 & & +be^{-(Re\Sigma z_j^2)/4}\Sigma_j(\partial_{x_j}\beta_1 dx_j+\partial_{y_j}\beta_1 dy_j)\\
 & & +\Sigma_j\partial_{x_j}(\beta_2 d e^{-(Re\Sigma \phi_j^2)/4})dx_j+\Sigma_j\partial_{y_j}(\beta_2 d e^{-(Re\Sigma \phi_j^2)/4})dy_j\\
  & & +\Sigma_j \partial_{x_j}(\beta_2 c+\beta_1 a)dx_j+\Sigma_j \partial_{y_j}(\beta_2 c+\beta_1 a)dy_j]\\
  & & -(A/B^2)[(1/2)a\beta_1e^{-(Re\Sigma z_j^2)/4}\Sigma_j(y_jdy_j-x_jdx_j)\\
 & & +ae^{-(Re\Sigma z_j^2)/4}\Sigma_j(\partial_{x_j}\beta_1 dx_j+\partial_{y_j}\beta_1 dy_j)\\
 & & +\Sigma_j \partial_{x_j}(\beta_2 ce^{-(Re\Sigma \phi_j^2)/4})dx_j+\Sigma_j \partial_{y_j}(\beta_2 c e^{-(Re\Sigma \phi_j^2)/4})dy_j\\
& & -\Sigma_j \partial_{x_j}(\beta_1 b+\beta_2 d)dx_j-\Sigma_j \partial_{y_j}(\beta_1 b+\beta_2 d)dy_j]\\
\Rightarrow d\theta &=& \frac{B}{A^2+B^2}[(1/2)b\beta_1e^{-(Re\Sigma z_j^2)/4}\Sigma_j(y_jdy_j-x_jdx_j)\\
 & & +be^{-(Re\Sigma z_j^2)/4}\Sigma_j(\partial_{x_j}\beta_1 dx_j+\partial_{y_j}\beta_1 dy_j)\\
 & & +\Sigma_j\partial_{x_j}(\beta_2 d e^{-(Re\Sigma \phi_j^2)/4})dx_j+\Sigma_j\partial_{y_j}(\beta_2 d e^{-(Re\Sigma \phi_j^2)/4})dy_j\\
  & & +\Sigma_j \partial_{x_j}(\beta_2 c+\beta_1 a)dx_j+\Sigma_j \partial_{y_j}(\beta_2 c+\beta_1 a)dy_j]\\
  & & -\frac{A}{A^2+B^2}[(1/2)a\beta_1e^{-(Re\Sigma z_j^2)/4}\Sigma_j(y_jdy_j-x_jdx_j)\\
 & & +ae^{-(Re\Sigma z_j^2)/4}\Sigma_j(\partial_{x_j}\beta_1 dx_j+\partial_{y_j}\beta_1 dy_j)\\
 & & +\Sigma_j \partial_{x_j}(\beta_2 ce^{-(Re\Sigma \phi_j^2)/4})dx_j+\Sigma_j \partial_{y_j}(\beta_2 c e^{-(Re\Sigma \phi_j^2)/4})dy_j\\
& & -\Sigma_j \partial_{x_j}(\beta_1 b+\beta_2 d)dx_j-\Sigma_j \partial_{y_j}(\beta_1 b+\beta_2 d)dy_j]\\
\end{array}
\]
Again as in \ref{Hyperbolic} let $X_{\theta}=\Sigma_j(a_j\partial_{x_j}+b_j\partial_{y_j})$. Then solving the Hamiltonian equation we get 

\[
\begin{array}{rcl}
a_j &=&\frac{B}{A^2+B^2}[(1/2)by_j \beta_1e^{-(Re\Sigma z_j^2)/4}+be^{-(Re\Sigma z_j^2)/4}\partial_{y_j}\beta_1\\
& & + d\partial_{y_j}(\beta_2e^{-(Re\Sigma \phi_j^2)/4})+\partial_{y_j}(\beta_2 c+\beta_1 a)]\\
 & & -\frac{A}{A^2+B^2}[(1/2)ay_j\beta_1 e^{-(Re\Sigma z_j^2)/4} + ae^{-(Re\Sigma z_j^2)/4}\partial_{y_j}\beta_1\\
 & & +c\partial_{y_j}(\beta_2 e^{-(Re\Sigma \phi_j^2)/4})-\partial_{y_j}(\beta_2 d+\beta_1 b)]\\
 &=& \frac{y_j\beta_1e^{-(Re\Sigma z_j^2)/4}}{2(A^2+B^2)}[Bb-Aa]+\frac{e^{-(Re\Sigma z_j^2)/4}}{A^2+B^2}[Bb-Aa]\partial_{y_j}\beta_1\\
 & & +\frac{Bd-Ac}{A^2+B^2}\partial_{y_j}(\beta_2e^{-(Re\Sigma \phi_j^2)/4})+ \frac{Bc+Ad}{A^2+B^2}\partial_{y_j}\beta_2\\
 & & +\frac{aB+bA}{A^2+B^2}\partial_{y_j}\beta_1\\
\end{array}
\]
Similarly we get $b_j$. Now let $(x(t),y(t))=(x_1(t),y_1(t),...,x_n(t),y_n(t))$ be the flow of $X_{\theta}$. So \[\partial_t x_j(t)=a_j(x(t),y(t))\ and\ \partial_t y_j(t)=b_j(x(t),y(t))\] Multiplying the first equation $\partial_t x_j(t)=a_j(x(t),y(t))$ by $x_j(t)$ and the second equation $\partial_t y_j(t)=b_j(x(t),y(t))$ by $y_j(t)$ and integrating we get (suppressing the $t$ all along below for convenient notations)

\[
\begin{array}{rcl}
\frac{1}{2}(x_j^2-y_j^2) &=& \int \frac{bB-aA}{A^2+B^2}e^{-(Re\Sigma z_j^2)/4}(x_j\partial_{y_j}-y_j\partial_{x_j})(\beta_1)dt \\
 & & +\int \frac{Bd-Ac}{A^2+B^2}(x_j\partial_{y_j}-y_j\partial_{x_j})(\beta_2e^{-(Re\Sigma \phi_j^2)/4})dt\\
 & & +\int \frac{Bc+Ad}{A^2+B^2}(x_j\partial_{y_j}-y_j\partial_{x_j})(\beta_2)dt\\
 & & +\int \frac{Ba+Ab}{A^2+B^2}(x_j\partial_{y_j}-y_j\partial_{x_j})(\beta_1)dt\\
\end{array}
\]

First we simplify the integrands. Consider the first term. So \[Bb-Aa=-\beta_1(a^2+b^2)+ \beta_2(bc-ad)e^{-(Re\Sigma \phi_j^2)/4}-\beta_2(bd+ac)\] Now consider the forth term. So \[aB+bA=\beta_1(a^2+b^2)e^{-(Re\Sigma z_j^2)/4}+\beta_2(ac+bd)e^{-(Re\Sigma \phi_j^2)/4}+\beta_2(bc-ad)\] Hence 
\[
\begin{array}{rcl}
(Bb-Aa)e^{-(Re\Sigma z_j^2)/4}+(aB+bA)&=& \beta_2(bc-ad)(e^{-(Re\Sigma \phi_j^2)/4}e^{-(Re\Sigma z_j^2)/4}+1)\\
& & +\beta_2(ac+bd)(e^{-(Re\Sigma \phi_j^2)/4}-e^{-(Re\Sigma z_j^2)/4})\\
&=&2bc \beta_2 (e^{-(Re\Sigma \phi_j^2)/4}e^{-(Re\Sigma z_j^2)/4}+1)
\end{array}
\]

The last line achieved by putting $c=d>0$ and $b=-a>0$. Similarly considering the second term we get 

\[
\begin{array}{rcl}
Bd-Ac &=& \beta_1(ad-bc)e^{-(Re\Sigma z_j^2)/4}-\beta_1(bd+ac)-\beta_2(c^2+d^2)\\
&=& -2bc\beta_1e^{-(Re\Sigma z_j^2)/4}-2\beta_2c^2
\end{array}
\]
Again the last line is achieved by putting $c=d>0$ and $b=-a>0$. Now consider the third term. 

\[
\begin{array}{rcl}
Bc+Ad &=& \beta_1(ac+bd)e^{-(Re\Sigma z_j^2)/4}-\beta_1(bc-ad)+\beta_2(c^2+d^2)e^{-(Re\Sigma \phi_j^2)/4}\\
&=& -2bc\beta_1+2\beta_2c^2 e^{-(Re\Sigma \phi_j^2)/4},\ (putting\ c=d>0,\ b=-a>0)\\
\end{array}
\]

Observe that inside the chart, i.e, when $\beta_2=0$ and $\beta_1=constant\neq 0$, all the integrands in the equation of the flow vanishes. So the equation becomes $x_j^2-y_j^2=\Xi=constant$. We shall now make the constant on the right side in the resulting equation of the flow negative for large values of $k$ and for suitable translation of the origin $0$ in this chart.\\

 \begin{center}
\begin{picture}(300,150)(-100,5)\setlength{\unitlength}{1cm}
\linethickness{.075mm}

\multiput(-5,0)(4.32,0){4}
{\line(0,1){4}}

\multiput(-5,0)(3.9,0){2}
{\line(0,1){4}}

\multiput(-5,0)(4.7,0){2}
{\line(0,1){4}}

\multiput(-5,0)(9,0){2}
{\line(0,1){4}}

\multiput(-5,0)(8.2,0){2}
{\line(0,1){4}}

\multiput(-5,0)(12.5,0){2}
{\line(0,1){4}}

\multiput(-5,0)(.4,0){2}
{\line(0,1){4}}

\multiput(-5,.001)(0,2){3}
{\line(1,0){13}}


\qbezier(1.3,2)(3.2,3.3)(3.2,3.3)
\qbezier(1.3,2)(-.3,.9)(-.3,.9)

\qbezier(3.2,3.3)(3.4,3.45)(3.6,3.5)
\qbezier(3.6,3.5)(3.8,3.45)(4,3.2)

\qbezier(4,3.2)(4.5,2.9)(5,2.8)
\qbezier(5,2.8)(5.8,2.6)(6.5,2.8)
\qbezier(6.5,2.8)(7,2.9)(7.5,3.2)

\qbezier(7.5,3.2)(7.75,3.45)(8,3.5)

\qbezier(-.3,.9)(-.45,.75)(-.7,.7)
\qbezier(-.7,.7)(-.9,.75)(-1.1,.9)

\qbezier(-1.1,.9)(-1.6,1.2)(-2.1,1.3)
\qbezier(-2.1,1.3)(-2.9,1.5)(-3.6,1.3)
\qbezier(-3.6,1.3)(-4.1,1.2)(-4.6,.9)

\qbezier(-4.6,.9)(-4.9,.65)(-5.1,.6)


\multiput(-1.05,3.9)(.2,0){4}{\line(1,0){.09}}
\multiput(-1.05,3.7)(.2,0){4}{\line(1,0){.09}}
\multiput(-1.05,3.5)(.2,0){4}{\line(1,0){.09}}
\multiput(-1.05,3.3)(.2,0){4}{\line(1,0){.09}}
\multiput(-1.05,3.1)(.2,0){4}{\line(1,0){.09}}
\multiput(-1.05,2.9)(.2,0){4}{\line(1,0){.09}}
\multiput(-1.05,2.7)(.2,0){4}{\line(1,0){.09}}
\multiput(-1.05,2.5)(.2,0){4}{\line(1,0){.09}}
\multiput(-1.05,2.3)(.2,0){4}{\line(1,0){.09}}
\multiput(-1.05,2.1)(.2,0){4}{\line(1,0){.09}}
\multiput(-1.05,1.9)(.2,0){4}{\line(1,0){.09}}
\multiput(-1.05,1.7)(.2,0){4}{\line(1,0){.09}}
\multiput(-1.05,1.5)(.2,0){4}{\line(1,0){.09}}
\multiput(-1.05,1.3)(.2,0){4}{\line(1,0){.09}}
\multiput(-1.05,1.1)(.2,0){4}{\line(1,0){.09}}
\multiput(-1.05,.9)(.2,0){4}{\line(1,0){.09}}
\multiput(-1.05,.7)(.2,0){4}{\line(1,0){.09}}
\multiput(-1.05,.5)(.2,0){4}{\line(1,0){.09}}
\multiput(-1.05,.3)(.2,0){4}{\line(1,0){.09}}
\multiput(-1.05,.1)(.2,0){4}{\line(1,0){.09}}

\multiput(3.25,3.9)(.2,0){4}{\line(1,0){.09}}
\multiput(3.25,3.7)(.2,0){4}{\line(1,0){.09}}
\multiput(3.25,3.5)(.2,0){4}{\line(1,0){.09}}
\multiput(3.25,3.3)(.2,0){4}{\line(1,0){.09}}
\multiput(3.25,3.1)(.2,0){4}{\line(1,0){.09}}
\multiput(3.25,2.9)(.2,0){4}{\line(1,0){.09}}
\multiput(3.25,2.7)(.2,0){4}{\line(1,0){.09}}
\multiput(3.25,2.5)(.2,0){4}{\line(1,0){.09}}
\multiput(3.25,2.3)(.2,0){4}{\line(1,0){.09}}
\multiput(3.25,2.1)(.2,0){4}{\line(1,0){.09}}
\multiput(3.25,1.9)(.2,0){4}{\line(1,0){.09}}
\multiput(3.25,1.7)(.2,0){4}{\line(1,0){.09}}
\multiput(3.25,1.5)(.2,0){4}{\line(1,0){.09}}
\multiput(3.25,1.3)(.2,0){4}{\line(1,0){.09}}
\multiput(3.25,1.1)(.2,0){4}{\line(1,0){.09}}
\multiput(3.25,.9)(.2,0){4}{\line(1,0){.09}}
\multiput(3.25,.7)(.2,0){4}{\line(1,0){.09}}
\multiput(3.25,.5)(.2,0){4}{\line(1,0){.09}}
\multiput(3.25,.3)(.2,0){4}{\line(1,0){.09}}
\multiput(3.25,.1)(.2,0){4}{\line(1,0){.09}}

\multiput(7.55,3.9)(.2,0){2}{\line(1,0){.09}}
\multiput(7.55,3.7)(.2,0){2}{\line(1,0){.09}}
\multiput(7.55,3.5)(.2,0){2}{\line(1,0){.09}}
\multiput(7.55,3.3)(.2,0){2}{\line(1,0){.09}}
\multiput(7.55,3.1)(.2,0){2}{\line(1,0){.09}}
\multiput(7.55,2.9)(.2,0){2}{\line(1,0){.09}}
\multiput(7.55,2.7)(.2,0){2}{\line(1,0){.09}}
\multiput(7.55,2.5)(.2,0){2}{\line(1,0){.09}}
\multiput(7.55,2.3)(.2,0){2}{\line(1,0){.09}}
\multiput(7.55,2.1)(.2,0){2}{\line(1,0){.09}}
\multiput(7.55,1.9)(.2,0){2}{\line(1,0){.09}}
\multiput(7.55,1.7)(.2,0){2}{\line(1,0){.09}}
\multiput(7.55,1.5)(.2,0){2}{\line(1,0){.09}}
\multiput(7.55,1.3)(.2,0){2}{\line(1,0){.09}}
\multiput(7.55,1.1)(.2,0){2}{\line(1,0){.09}}
\multiput(7.55,.9)(.2,0){2}{\line(1,0){.09}}
\multiput(7.55,.7)(.2,0){2}{\line(1,0){.09}}
\multiput(7.55,.5)(.2,0){2}{\line(1,0){.09}}
\multiput(7.55,.3)(.2,0){2}{\line(1,0){.09}}
\multiput(7.55,.1)(.2,0){2}{\line(1,0){.09}}

\multiput(-4.95,3.9)(.2,0){2}{\line(1,0){.09}}
\multiput(-4.95,3.7)(.2,0){2}{\line(1,0){.09}}
\multiput(-4.95,3.5)(.2,0){2}{\line(1,0){.09}}
\multiput(-4.95,3.3)(.2,0){2}{\line(1,0){.09}}
\multiput(-4.95,3.1)(.2,0){2}{\line(1,0){.09}}
\multiput(-4.95,2.9)(.2,0){2}{\line(1,0){.09}}
\multiput(-4.95,2.7)(.2,0){2}{\line(1,0){.09}}
\multiput(-4.95,2.5)(.2,0){2}{\line(1,0){.09}}
\multiput(-4.95,2.3)(.2,0){2}{\line(1,0){.09}}
\multiput(-4.95,2.1)(.2,0){2}{\line(1,0){.09}}
\multiput(-4.95,1.9)(.2,0){2}{\line(1,0){.09}}
\multiput(-4.95,1.7)(.2,0){2}{\line(1,0){.09}}
\multiput(-4.95,1.5)(.2,0){2}{\line(1,0){.09}}
\multiput(-4.95,1.3)(.2,0){2}{\line(1,0){.09}}
\multiput(-4.95,1.1)(.2,0){2}{\line(1,0){.09}}
\multiput(-4.95,.9)(.2,0){2}{\line(1,0){.09}}
\multiput(-4.95,.7)(.2,0){2}{\line(1,0){.09}}
\multiput(-4.95,.5)(.2,0){2}{\line(1,0){.09}}
\multiput(-4.95,.3)(.2,0){2}{\line(1,0){.09}}
\multiput(-4.95,.1)(.2,0){2}{\line(1,0){.09}}

\put(1,-.5){Stable Manifold}
\put(6,-.5){$P$}
\put(-3,-.5){$Q$}
\put(5.6,1){$\cdot\ o'$}
\put(-2.7,3){$\cdot\ o'$}

\end{picture}\end{center}

.\\
In the above picture first we consider the right side cube, i.e, $P$. The integrands are non-zero only on the shaded region in the picture. So we try to determine the signs of the integrands in this region.\\

Obviously $A^2+B^2\geq 0$. So consider the sum of first and forth term as above, i.e, the equation 
\[
\begin{array}{rcl}
(Bb-Aa)e^{-(Re\Sigma z_j^2)/4}+(aB+bA)&=& 2bc \beta_2 (e^{-(Re\Sigma \phi_j^2)/4}e^{-(Re\Sigma z_j^2)/4}+1)
\end{array}
\]
Obviously it is bigger than or equal to zero. So let us determine the sign of \[(x_j\partial_{y_j}-y_j\partial_{x_j})(\beta_1)\] Now observe that for large $k$ the range of $|x_j|$ is very small. So the sign of $-y_j\partial_{x_j}(\beta_1)$ determines the sign if we translate the origin as shown in the picture above which is possible by \ref{translation}. Observe that on the cube P (see the picture) the part of the shaded region of cube P touching the center cube, here  $\beta_1$ with respect to $x_j$ is increasing. Hence $\partial_{x_j}\beta_1\geq 0$. So $-y_j\partial_{x_j}(\beta_1)<0$ as on the intersection of the stable manifold and the shaded region, $y_j>0$. So this integrand is negative on the shaded region.\\

Now consider the second and the third integrands. The second integrand contains a factor \[(x_j\partial_{y_j}-y_j\partial_{x_j})(\beta_2 e^{-(Re\Sigma \phi_j^2)/4})=e^{-(Re\Sigma \phi_j^2)/4}(x_j\partial_{y_j}-y_j\partial_{x_j})(\beta_2)+\beta_2(x_j\partial_{y_j}-y_j\partial_{x_j})(e^{-(Re\Sigma \phi_j^2)/4})\] Observe that the third integrand contains a factor $(x_j\partial_{y_j}-y_j\partial_{x_j})(\beta_2)$. So we compute \[(Bd-Ac)(e^{-(Re\Sigma \phi_j^2)/4})+(Bc+Ad)=-2bc \beta_1 (e^{-(Re\Sigma \phi_j^2)/4}e^{-(Re\Sigma z_j^2)/4}+1)<0\] Now as above the sign is determined by $-y_j\partial_{x_j}\beta_2$. But this time  we have $\beta_2$ instead of $\beta_1$, which is decreasing on the shaded region. So $-y_j\partial_{x_j}\beta_2>0$. Hence the sign is negative. \\

But we are still left with one term, namely the one containing the factor \[\beta_2(x_j\partial_{y_j}-y_j\partial_{x_j})(e^{-(Re\Sigma \phi_j^2)/4})\] from the second integrand. Now we know that $Bd-Ac<0$. As $e^{-(Re\Sigma \phi_j^2)/4}$ is decreasing with respect to $x_j$, so if we take $k$ large and shift the origin in the $y_j$ component down enough we can make this integrand negative and hence we are done with the cube $P$. \\

So we have shown that the integrands are all negative in the shaded region of the cube $P$ which intersects the central cube in the picture. So we integrate from $t=0$ to $t=t_0$ where $t=0$ corresponds to $p_1$ the center of the central cube and $t=t_0$ corresponds to the point where the curve in the picture leaves the shaded region and enters the non-shaded region in the cube $P$. So the integral from $t=0$ to $t=t_0$ is negative. Observe that at $t=0$ i.e, $p_1$ in the central cube, all the integrands are zero.\\ 

We can do similar thing for the cube $Q$ and the global stable manifold is achieved by observing that in the above argument the role of $(a,b)$ and $(c,d)$ can be interchanged. \\

Let the stable manifold be denoted by $L'$. Now we shall show that the stable manifold $L'$ is diffeomorphic to the regular Lagrangian $L$, moreover $L$ and $L'$ are $C^0$-close. In the above picture the horizontal line represents $L$ and the curved line represents the stable manifold $L'$. First we need a lemma.

\begin{lemma}
\label{Control}
The constant $\Xi$ in the explanation of $L'$ above in the chart $P$, is negative and $|\Xi|$ can be made sufficiently small.
\end{lemma}

\begin{proof}
The fact that $\Xi$ is negative has been proved above. So we prove that $|\Xi|$ can be made sufficiently small. Observe that only the shaded region in the picture contribute to $\Xi$, as on the non-shaded region all the integrands are zero.\\

 We start with the first and forth term contributing in the constant $\Xi$ as above. It is \[2bc \beta_2 (e^{-(Re\Sigma \phi_j^2)/4}e^{-(Re\Sigma z_j^2)/4}+1)(x_j\partial_{y_j}-y_j\partial_{x_j})(\beta_1)\]
 
 $bc$ is coming from $w$ which is $|w|<\delta$ with $0<\delta<1/2$ (see \ref{QTT}) and $\beta_2\leq s_k$ ( here $\beta_2=(\beta_2)_k\leq s_k$, $k$ index of the sequence in section \ref{DC} and $s_k$ is not the sequence sections of the vector bundle) which becomes smaller and smaller as $k$ increases. Since we are translating the origin in the $y_j$ direction (\ref{translation}) and as we can make $|x_j|$ very small, we can do it in a way to make the modulus of the above term small.\\
 
 Same is applied to the term \[-2bc \beta_1 (e^{-(Re\Sigma \phi_j^2)/4}e^{-(Re\Sigma z_j^2)/4}+1)(x_j\partial_{y_j}-y_j\partial_{x_j})(\beta_2)\]
 
 So we consider the remaining term, namely \[\beta_2(x_j\partial_{y_j}-y_j\partial_{x_j})(e^{-(Re\Sigma \phi_j^2)/4})\] A suitable translation of the origin in the $y_j$ direction (\ref{translation}) along with the observation that $\beta_2$ is very small, will also make the modulus of this term small.

\end{proof}

\begin{proposition}
\label{Diff-C^0}
$L'$ is diffeomorphic to $L$ and is $C^0$-close to $L$.
\end{proposition} 

\begin{proof}
As explained above on the chart that contains $p_1$ (the center cube in the picture) the stable manifold is given by \[\{x_j=y_j: j=1...n\}\] So projection on the $x$-plane making $y_j$'s zero gives a diffeomorphism on this chart.\\

In the adjacent charts the stable manifold is given by \[\{x_j^2-y_j^2=\Xi=constant\}\] where the sign of the constant negative. Moreover $|\Xi|$ can be made small (see \ref{Control}). So the projection on $x$-plane defines a diffeomorphism on these charts. Notice that on the charts adjacent to these charts (i.e, charts like $P$ and $Q$ as in the picture) which intersects $L$ the description of the stable manifold $L'$ is given by \[\{x_j^2-y_j^2=\Xi=constant\}\] as well (See the explanation for $L'$ above and \ref{Control}). But this time the value of the constant $\Xi=\Xi_1$ (say) is determined by performing the integral from $t=t'_0$ to $t=t_1$ where $t=t_1$ corresponds to the similar point in this chart as $t=t_0$ in the cube $P$ and $t'_0$ corresponds to a point on the curve in the cube $P$ where all the integrands are zero, for example the point on the curve in the cube $P$ right over the center of the cube $P$. As in \ref{Control} we can make $|\Xi_1|$ very small. Continuing this way we can make all the $|\Xi_i|$ small.\\

 This means that in the picture $L'$ will continue in the adjacent charts of charts like $P$ and $Q$ as hyperbolic arcs as in $P$ and $Q$. So we can continue this way. Notice that $|\Xi|$ can be made small on these chart as in \ref{Control}, $L'$ never intersects the charts that does not intersect $L$, i.e, $L'$ never leaves the charts that form a tubular neighborhood of $L$, moreover for all charts $|x_j|$ are very small (See the section \ref{DC})and $|\Xi|$ small, in the description of $L'$ which shows that $L'$ is $C^0$-close to $L$.
\end{proof}

Now we construct the iso-symplectic immersion and show that it is possible to pull back the Weinstein structure by this iso-symplectic immersion.\\

\begin{proposition}
\label{iso-sym pull}
There exists an iso-symplectic immersion $F:W\to W$ which takes $L$ to $L'$, moreover it is possible to pull back the Weinstein structure by this iso-symplectic immersion with some modification near $\partial W$. In fact $F$ is homotopic through iso-symplectic immersions to identity and hence the pullback Weinstein structure (with some modification near $\partial W$) is homotopic through Weinstein structures (with some modification near $\partial W$) to the given Weinstein structure.
\end{proposition}

\begin{proof}
First let us construct the iso-symplectic immersion. We know there exists a diffeomorphism $\phi:L\to L'$. Since $L'$ is a Lagrangian submanifold of $W$, $\phi$ can be thought of as a Lagrangian immersion $\phi:L\to L'\subset W$. Moreover as $L$ is regular the underlying symplectic structure (of the Weinstein structure) of $W$ is given by that of $T^*L$ near $L$. So we can use \ref{WNT} to construct an iso-symplectic immersion $\phi':U\to W$ taking $L$ to $L'$, where $U$ is a tubular neighborhood of $L$ in $W$, i.e, to say of the zero section of $T^*L$. As $L'$ is $C^0$-close to $L$ we can assume that $L'\subset U$.\\

First extend $\phi'$ to all of $W$ as an iso-symplectic immersion. One way to do it is to consider $d\phi'=\Psi$ on $U$ and extend it to $W$ as a formal iso-symplectic monomorphism, i.e, $\Psi^*\omega=\omega$. Now extend $L$ (which is the given regular Lagrangian) into a polyhedron of $W$ of positive codimension (a core for $W$) say $A'$. Now we can use \ref{isosymp} (relative version) to show the existence of an iso-symplectic immersion $\psi$ on $Op(A')$ such that near $L$ it is equal to $\phi'$. As $W$ is a cobordism, hence open manifold by \ref{datta} there exists a homotopy of iso-symplectic immersions $g_t:W\to W$ such that $g_0=identity$ and $g_1(W)\subset Op(A')$. The required extension of $\phi'$ is $\psi\circ g_1$. This is the required $F$.\\

Since we are in an equi-dimensional  setup both the derivatives of identity and $F$ at some point $x\in W$ belongs to $Sp(2n,\mathbb{R})$ and since $Sp(2n,\mathbb{R})$ is path-connected we can join these derivatives by a path in $Sp(2n,\mathbb{R})$ but this path depends on $x\in W$. Thus we get a family of paths in $Sp(2n,\mathbb{R})$ joining the derivatives of identity and $dF$ parametrized by the points of $W$.\\

Let $\Phi_t$ be this path such that $\Phi_0=dF$ and $\Phi_1=d(id)$. So we use \ref{isosymp} on $Op(A')$  where $A'$ is as above. We get a path of iso-symplectic immersions say $\phi_t$ such that $\phi_0=\psi=F_{|Op(A')}$ and $\phi_1=identity$ on $Op(A')$. \\

Consider $\phi_t\circ g_1$, it is a path joining $F$ and $g_1$. If we concatenate this path with $g_{1-t}$, we get a path joining $F$ and $identity$ as $g_0=id$. Call this path $F_t:W\to W$, i.e, $F_t$ is a homotopy of iso-symplectic immersions ($F_t^*\omega=\omega$) such that $F_0=id_W$ and $F_1=F$.\\

Now we come to the second part of the statement. So we shall prove that we can pull back the Weinstein structure by $F$. Let $(\omega, X,\psi)$ be the given Weinstein structure. We shall define the pull back Weinstein structure under $F$.\\

As $F$ is iso-symplectic $F^*\omega=\omega$. Set $X'=(dF)^{-1}X$ and $\psi'=\psi\circ F$. First we notice that as $F$ is an immersion and we are in an equi-dimensional setup, $\psi'$ is a Morse function. To see this note that any immersion is a local diffeomorphism in an equi-dimensional setup by inverse function theorem.\\

 Secondly we shall see that $X'$ is a Liouville vectorfield. To see this let $\lambda$ be the Liouville form corresponding to $X$. Consider \[\omega_x((dF)^{-1}_{F(x)}(X_{F(x)}),Y_x)=F^*(\omega_{F(x)})((dF)^{-1}_{F(x)}(X_{F(x)}),Y_x)\] But $F^*(\omega_{F(x)})((dF)^{-1}_{F(x)}(X_{F(x)}),Y_x)$ is $\omega_{F(x)}(X_{F(x)},dF_x(Y_x))$ which is equal to \[\lambda_{F(x)}(dF_x(Y_x))=F^*(\lambda_x)(Y_x)\] Which shows that the Liouville form corresponding to $X'$ is $F^*\lambda$.\\

Notice that if $x'\neq x$ be such that $F(x')=F(x)$, then the above argument is applied to $x'$ as well. We are only using the fact that $dF_x:T_xW\to T_{F(x)}W$ is an isomorphism.\\

Now we show that $X'$ and $\psi'$ are gradient like. Since $X$ and $\psi$ are gradient like we have \[X.\psi \geq \delta (|X|^2+|d\psi|^2)\]

 So consider \[(X'.\psi')_x=(dF)^{-1}_{F(x)}(X_{F(x)}).(\psi\circ F)_x=d\psi_{F(x)}(dF_x (dF)^{-1}_{F(x)}(X_{F(x)}))=d\psi_{F(x)}(X_{F(x)})\] 
 
 But $d\psi_{F(x)}(X_{F(x)})\geq \delta_{F(x)}(|X_{F(x)}|^2+|d\psi_{F(x)}|^2)$. Choose a function $\delta'$ such that \[\delta_{F(x)}(|X_{F(x)}|^2+|d\psi_{F(x)}|^2)\geq \delta'_x(|(dF)^{-1}_{F(x)}(X_{F(x)})|^2+|d\psi_{F(x)}dF_x|^2)\] This choice of $\delta'$ makes $X'$ and $\psi'$ gradient like.\\
 
 Now we must modify the pull back Weinstein structure $\mathfrak{W}_1=(\omega,\psi',X')$ so that $X'\pitchfork \partial W$. So consider a tubular neighborhood $U_{\delta}$ of the boundary $\partial W$ in $W$ and set $\tilde{W}:=W-U_{\delta}$. Let $V_{\delta}$ be a tubular neighborhood of $\partial \tilde{W}$ in $\tilde{W}$.\\

  As above $F_t:W\to W$ is the homotopy of iso-symplectic immersions such that $F_0=id_W$ and $F_1=F$. So obviously $F_t^*\omega=\omega$ and as we have observed above $F_t^*\lambda$ is the Liouville form with respect to $\omega$ where $\lambda$ is the given Liouville form corresponding to $\omega$. This is because the exterior derivative and pull back commute so observe
 
  \[d(F_t^*\lambda)=F_t^*(d\lambda)=F_t^*\omega=\omega=d\lambda\]
 In the above we have used the fact that $\lambda$ is a Liouville form with respect to $\omega$ and $F_t:W\to W$ is iso-symplectic. Therefore $d(\lambda-F_t^*\lambda)=0$ 
and set $\lambda_t:= F_t^*\lambda$ so that

 \[\lambda_t:= F_t^*\lambda=\lambda+dh_t\] for some $h_t:W\to \mathbb{R}$ with $h_0=0$ using triviality of first cohomology of $W$.\\
 
Let $X$ be the Liouville vector field corresponding to $\lambda$. Then $X+X_{h_t}$ is the Liouville vector field corresponding to $\lambda_t$ where $X_{h_t}$ is the hamiltonian vector field corresponding to the hamiltonian function $h_t$. Consider the tubular neighborhood $U_{\delta}$ of $\partial W$. Let $\chi :U_{\delta}\to \mathbb{R}$ be such that $\chi=0$ on $U_{\delta/3}$ and $\chi=1$ on $U_{\delta}-U_{2\delta/3}$. \\

So consider the liouville form $\lambda'_t=\lambda+d(\chi h_t)$ with Liouville vector field $X+X_{\chi h_t}$. This new family Weinstein structures defined by $\lambda'_t$ has the desired property as near $\partial W$ the liouville vector field is $X$ which is the given liouville vector field and hence $\pitchfork$ to $\partial W$. We may need to adjust $\delta'$ in the definition of gradient like vector field in order to retain this property.

\end{proof}

 Now we shall show that on a tubular neighborhood of the stable manifold and away from the singularity $p_1\in L$, the fibers of $s$ are symplectic sub-manifolds, i.e $s$ induce a symplectic foliation.\\

So one needs to figure out $ker(ds)$. On the interior of a cube intersecting $L$, more precisely on the set $\beta=constant\neq 0$, we compute $ker(ds)$. 

\[
\bar{\partial}[(ae^{-Re(\Sigma z_j^2)/4}-b)+i(be^{-Re(\Sigma z_j^2)/4}+a)]\]
\[=\frac{e^{-Re(\Sigma z_j^2)/4}}{4}\Sigma_j[-(ax_j+by_j)dx_j+(-bx_j+ay_j)dy_j+i(-bx_j+ay_j)dx_j+i(ax_j+by_j)dy_j]
\]

\[
\partial[(ae^{-Re(\Sigma z_j^2)/4}-b)+i(be^{-Re(\Sigma z_j^2)/4}+a)]
\]
\[
=\frac{e^{-Re(\Sigma z_j^2)/4}}{4}\Sigma_j[(-ax_j+by_j)dx_j+(bx_j+ay_j)dy_j-i(bx_j+ay_j)dx_j+i(-ax_j+by_j)dy_j]
\]

\[
A[(ae^{-Re(\Sigma z_j^2)/4}-b)+i(be^{-Re(\Sigma z_j^2)/4}+a)]
\]
\[
=2\Sigma_j[(x_jdy_j-y_jdx_j)(be^{-Re(\Sigma z_j^2)/4}+a)-i(x_jdy_j-y_jdx_j)(ae^{-Re(\Sigma z_j^2)/4}-b)]
\]

Combining together we get 

\[
d_As=\Sigma_j[\{-\frac{e^{-Re(\Sigma z_j^2)/4}}{2}ax_j-2y_j(a+be^{-Re(\Sigma z_j^2)/4})\}dx_j
\]
\[
+\{\frac{e^{-Re(\Sigma z_j^2)/4}}{2}ay_j+2x_j(a+be^{-Re(\Sigma z_j^2)/4})\}dy_j]
\]
\[
+i\Sigma_j[\{-\frac{e^{-Re(\Sigma z_j^2)/4}}{2}bx_j+2y_j(-b+ae^{-Re(\Sigma z_j^2)/4})\}dx_j
\]
\[
+\{\frac{e^{-Re(\Sigma z_j^2)/4}}{2}by_j-2x_j(-b+ae^{-Re(\Sigma z_j^2)/4})\}dy_j]
\]

 $ker(d_As)$ is given by  the equation $BZ=0$, where $B$ is the matrix 

\[ \left( \begin{array}{ccccc}
 u_{11} & v_{11} & ... & u_{1n} & v_{1n}\\
 u_{21} & v_{21} & ... & u_{2n} & v_{2n}\\ 
  \end{array}\right)\]
 
where $*=2e^{Re(\Sigma z_j^2)/4}(be^{-Re(\Sigma z_j^2)/4}+a)$, $\tilde{*}=2e^{Re(\Sigma z_j^2)/4}(ae^{-Re(\Sigma z_j^2)/4}-b)$, $u_{1j}=-ax'_j-2y'_j*$, $v_{1j}=ay'_j+2x'_j*$, $u_{2j}=-bx'_j+2y'_j\tilde{*}$, $v_{2j}=by'_j-2x'_j\tilde{*}$ and $Z=(r_1\ s_1\ ...\ r_n\ s_n)^T$, $x'_j=\frac{e^{-Re(\Sigma z_j^2)/4}}{2}x_j$ and $y'_j=\frac{e^{-Re(\Sigma z_j^2)/4}}{2}y_j$. By making $r_2,s_2,...,r_n,s_n$ arbitrary we get the equation 

\[
\begin{array}{rcl}
-r_1(ax'_1+2y'_1*)+s_1(ay'_1+2x'_1*) &=& \bigstar_1\\
-r_1(bx'_1-2y'_1\tilde{*})+s_1(by'_1-2x'_1\tilde{*}) &=& \bigstar_2\\
\end{array}
\]
Where $\bigstar_1=\Sigma_{j=2}^n(u_{1j}r_j+v_{1,j}s_j)$ and $\bigstar_2=\Sigma_{j=2}^n(u_{2j}r_j+v_{2,j}s_j)$. The coefficient matrix of this system of equation is given by 

\[ E=\left( \begin{array}{cc}
 -(ax'_1+2y'_1*) & (ay'_1+2x'_1*) \\
 -(bx'_1-2y'_1\tilde{*}) & (by'_1-2x'_1\tilde{*}) \\ 
  \end{array}\right)\]
Whose determinant is given by $2({x'_1}^2-{y'_1}^2)(b*+a\tilde{*})$. So when $x'_1\neq \pm y'_1$ we can solve this system of equations and the solution space is of dimension $2n-2$, which means outside the cube containing the singularity $p_1$ we can solve this system of equation on the tubular neighborhood of the stable manifold $L'$, provided the tubular neighborhood is thin enough. \\

Now we compute $r_1$ and $s_1$ by assuming $n=4$, i.e, besides $r_1,s_1$ there are only $r_2$ and $s_2$, the general case is similar. We get \[r_1=\frac{1}{det(E)}[-abx_2y_1r_2-aby_1y_2s_2+bx_1x_2r_2-bx_1y_2s_2+M]\] and \[s_1=\frac{1}{det(E)}[a^2y_1x_2r_2+a^2y_1y_2s_2-abx_1x_2r_2-a^2x_1y_2s_2+N]\] where $M$ and $N$ are terms which contains $*$ and (or, both) $\tilde{*}$ as factors. Observe that $*$ and $\tilde{*}$ are very small if $a,b$ are small which is the case. \\

Observe that 

\[
\begin{array}{rcl}
\omega((r_1,s_1,r_2,s_2),(r'_1,s'_1,r'_2,s'_2)) &=& dx_1\wedge dy_1((r_1,s_1),(r'_1,s'_1))+dx_2\wedge dy_2((r_2,s_2),r'_2,s'_2)\\
 &=& (r_2s'_2-r'_2s_2)[1+\{(a^3b+2a^2b-a^2b^2)x_1x_2y_1y_2\\
 & & -(a^2b+ab^2)x_1^2x_2y_2\}+F(*,\tilde{*},a,b,x_j,y_j)]
\end{array}
\]

 So we see that if $k$ is large then $x$ is very small and $|y_j|<2$ (recall the translations) on the tubular neighborhood of the stable manifold $L'$ and hence $ker(d_As)$ restricted to this tubular neighborhood  is symplectic. As we have observed that on the tubular neighborhood of $L'$, both $r_1$ and $s_1$ are very small in comparison to $r_2$ and $s_2$ for high values of $k$, hence $ker(d_As)$ is small perturbation of the space generated by $r_2$ and $s_2$ which is the $x_2y_2$-space. So its symplectic complement $\perp_{\omega}$ is a small perturbation of the $x_1y_1$-plane but the morse function $\phi$ of the Weinstein structure in this coordinates is given by $\Sigma_jy_j^2$ and hence any vector field $X$ in the symplectic complement of $ker(ds)$ will have the property \[|X\phi|< \infty\] So parallel transport along a smooth path in the base $\mathbb{C}$ gives rise to a diffeomorphism between corresponding fibers as in Lemma 6.5 of \cite{Pardon}.\\

Define $W_0=(s\circ F)^{-1}(0)$ and the Weinstein structure is the pull-back Weinstein structure (See \ref{iso-sym pull}) by this iso-symplectic immersion $F$. Observe that $W_0$ is a symplectic submanifold because $F$ is an immersion between equi-dimensional manifolds and hence $\pitchfork$ to any submanifold in $W$ so it is also $\pitchfork$ to $s^{-1}(0)$. The fact that it is symplectic follows from the fact that $s^{-1}(0)$ is symplectic and $F$ is iso-symplectic.\\

 Let us now define the Lagrangian sphere required in \ref{Weinstein Lefschetz fibration}. Under parallel transport along the radial paths in $D^2\subset \mathbb{C}$ with respect to the symplectic connection induced by $\omega$, the critical points of $s$ sweeps out a Lagrangian disc called \textit{Lefschetz thimble} (\cite{Seidel}) which is the stable manifold of the Hamiltonian vector field $X_{Im \log s}$. The fiber over $0$ of a Lefschetz thimble is an exact Lagrangian sphere called the \textit{vanishing cycle}. Let $L'_1$ be the vanishing cycle corresponding to the singularity $p_1$ and let $F^{-1}(L'_1)=L_1\subset W_0$. Then the required Weinstein Lefschetz fibration is $((W_0,\lambda_0,\phi_0);L_1)$.\\

{\bf Acknowledgement:} We would like to thank Prof. Y. Eliashberg for his support and the reviewer from a previous submission for his comments. \\


\begin{thebibliography}{99}
\bibitem{Donaldson} Donaldson, S.K. Symplectic submanifolds and almost-complex geometry, J.Differential Geom. 44(1996), no. 4, 666-705. MR 1438190(98h:53045) 3,5,6,12,13,16

\bibitem{Datta} Datta, M. Islam, M.R. Submersions on open symplectic manifolds, Topol. Appl. 156(10) (2009) 1801-1806.

\bibitem{Eliashberg} Eliashberg, Yakov; Ganatra, Sheel; Lazarev, Oleg. Flexible Lagrangians. arXiv. 

\bibitem{Mishachev}  Eliashberg, Y.; Mishachev, N. Introduction to the h-principle. Graduate Studies in Mathematics, 48. American Mathematical Society, Providence, RI, 2002. xviii+206 pp. ISBN: 0-8218-3227-1 (Reviewer: John B. Etnyre)

\bibitem{Pardon} Giroux, Emmanuel; Pardon, John. Existence of Lefschetz fibrations on Stein and Weinstein domains. Geom. Topo. 21(2017), no.2,963--997.

\bibitem{Weinstein} Weinstein, Alan. Contact surgery and symplectic handlebodies, Hokkaido Math. J. 20(1991), n0.2, 241-251. MR 1114405 (92g:53028) 20, 24

\bibitem{Hormander} H\"{o}rmander, Lars. An introduction to Complex Analysis in Several Variables, North Holand, Amsterdam, 1973.

\bibitem{Kai} Cieliebak, Kai; Eliashberg, Yakov. \textit{From Stein to Weinstein and back}, American Mathematical Society Colloquium Publications, vol. 59, American Mathematical Society, Providence, RI, 2012, Symplectic geometry of affine complex manifolds. MR 3012475 1,4,19,20,24,25

\bibitem{Seidel} Seidel, Paul; Fukaya Categories and Picard-Lefschetz Theory, Volumn 10, Zurich lectures in advanced mathematics.
 
 \end{thebibliography}
\end{document}